
\documentclass[12pt,reqno]{amsart}%
\usepackage{amsmath, amsfonts, amssymb, amsthm, amscd, amsbsy}
\usepackage{fancyhdr}
\usepackage[usenames,dvipsnames,svgnames,x11names,hyperref]{xcolor}
\usepackage{geometry}
\usepackage{graphicx}
\usepackage{enumerate}
\usepackage[pagebackref]{hyperref}
\usepackage{amsmath}
\usepackage{amsfonts}
\usepackage{amssymb}%
\setcounter{MaxMatrixCols}{30}
\providecommand{\U}[1]{\protect\rule{.1in}{.1in}}
\hypersetup{backref=true,
pagebackref=true,
hyperindex=true,
colorlinks=true,
breaklinks=true,
urlcolor=NavyBlue,
linkcolor=Fuchsia,
bookmarks=true,
bookmarksopen=false,
filecolor=black,
citecolor=ForestGreen,
linkbordercolor=red
}
\geometry{
a4paper,
total={8.5in,11.5in},
left=1in,
right=1in,
top=1in,
bottom=1in,
}
\newtheorem{theorem}{Theorem}[section]
\theoremstyle{plain}

\newtheorem{corollary}{Corollary}[section]

\numberwithin{equation}{section}
\allowdisplaybreaks
\AtBeginDocument{\hypersetup{pdfborder={0 0 0.01}}}

\begin{document}
\title[CKN inequalities]{Caffarelli-Kohn-Nirenberg inequalities for curl-free vector fields and second
order derivatives}
\author{Cristian Cazacu}
\address{Cristian Cazacu: $^{1}$Faculty of Mathematics and Computer Science \\
University of Bucha-rest\\
010014 Bucharest, Romania\\
\& $^{2}$Gheorghe Mihoc-Caius Iacob Institute of Mathematical\\
Statistics and Applied Mathematics of the Romanian Academy\\
050711 Bucharest, Romania }
\email{cristian.cazacu@fmi.unibuc.ro}
\author{Joshua Flynn}
\address{Joshua Flynn: Department of Mathematics\\
University of Connecticut\\
Storrs, CT 06269, USA}
\email{joshua.flynn@uconn.edu}
\author{Nguyen Lam}
\address{Nguyen Lam: School of Science and the Environment\\
Grenfell Campus, Memorial University of Newfoundland\\
Corner Brook, NL A2H5G4, Canada }
\email{nlam@grenfell.mun.ca}
\thanks{J.F. was partially supported by a Simons Collaboration grant from the Simons
Foundation. N.L. was partially supported by an NSERC Discovery Grant.}
\date{\today}

\begin{abstract}
The present work has as a first goal to extend the previous results in
\cite{CFL20} to weighted uncertainty principles with nontrivial radially
symmetric weights applied to curl-free vector fields. Part of these new
inequalities generalize the family of Caffarelli-Kohn-Nirenberg (CKN)
inequalities studied by Catrina and Costa in \cite{CC} from scalar fields to
curl-free vector fields. We will apply a new representation of curl-free
vector fields developed by Hamamoto in \cite{HT21}. The newly obtained results
are also sharp and minimizers are completely described.

Secondly, we prove new sharp second order interpolation functional
inequalities for scalar fields with radial weights generalizing the previous
results in \cite{CFL20}. We apply new factorization methods being inspired by
our recent work \cite{CFL21}. The main novelty in this case is that
we are able to find a new independent family of minimizers based on the
solutions of Kummer's differential equations.

We point out that the two types of weighted inequalities under
consideration (first order inequalities for curl-free vector fields vs. second
order inequalities for scalar fields) represent independent families of
inequalities unless the weights are trivial.

\end{abstract}
\subjclass[2020]{81S07, 26D10, 46E35, 26D15, 58A10}
\keywords{uncertainty principles, Caffarelli-Kohn-Nirenberg inequalities, curl-free
vector fields, sharp constants, minimizers}
\maketitle

\section{Introduction}

The aim of this paper is twofold:

\begin{enumerate}
[a).]

\item We study the sharp constants in the weighted inequalities of
Caffarelli-Kohn-Nirenberg (CKN) type having the form
\begin{equation}
\label{type1}\int_{\mathbb{R}^{N}}\frac{| \mathbf{U}|^{2}}{|x|^{2a}%
}\mathrm{dx}\int_{\mathbb{R}^{N}}\frac{|\nabla\mathbf{U}|^{2}}{|x|^{2b}%
}\mathrm{dx}\geq C_{1}(N,a,b)\left(  \int_{\mathbb{R}^{N}}\frac{|\mathbf{U}%
|^{2}}{|x|^{a+b+1}}\mathrm{dx}\right)  ^{2}\text{, }
\end{equation}
for any curl-free vector field (defined below) $\mathbf{U}\in\left(  C_{0}^{\infty}(\mathbb{R}^{N}\setminus\{0\})\right)^{N}$
and $C_{1}(N, a, b)$ is the sharp constant in \eqref{type1}; the
parameter $(a,b)$ may be any point in $\mathbb{R}^{2}$; $N\geq2$ and
$C_{0}^{\infty}(\mathbb{R}^{N}\setminus\{0\})$ denotes the space of smooth
functions compactly supported in $\mathbb{R}^{N}\setminus\{0\}$.

\item We analyze the sharp second order CKN inequalities for scalar fields of
the form
\begin{equation}
\int_{\mathbb{R}^{N}}\frac{|\Delta u|^{2}}{|x|^{2a}}\mathrm{dx}\int
_{\mathbb{R}^{N}}|x|^{2a+2}\left\vert \frac{x}{\left\vert x\right\vert }%
\cdot\nabla u\right\vert ^{2}\mathrm{dx}\geq C_{2}(N,a)\left(  \int
_{\mathbb{R}^{N}}|\nabla u|^{2}\mathrm{dx}\right)  ^{2}, \label{type2}%
\end{equation}
for any $u\in C_{0}^{\infty}(\mathbb{R}^{N}\setminus\{0\})$; $a\in\mathbb{R}$
is given and $C_{2}(N,a)$ denotes the best constant in \eqref{type2}.
\end{enumerate}

\paragraph{\textbf{State of the art.}}

The CKN inequalities were first introduced in 1984 by Caffarelli, Kohn and
Nirenberg in the pioneering work \cite{CKN} to generalize many well-known and
important inequalities in analysis such as Gagliardo-Nirenberg inequalities,
Hardy-Sobolev inequalities, Nash's inequalities, Sobolev inequalities, etc.
Since then, due to their important roles and applications in many areas of
pure and applied mathematics, especially in analysis and PDE, the CKN type
inequalities have been studied extensively in the literature by many authors;
e.g., see \cite{BM18, BSY21, CKN2, CW, CLZ21, DD02, DEL16, D18, DLL18, F20, FLL21, LL} and the
references therein. Afterwards, many researchers have become interested in
studying finer properties such as sharp constants for CKN inequalities and
their extremizers.
An important subfamily consists of the extensively studied sharp $L^{2}$-CKN inequalities given by
\begin{equation}
\int_{\mathbb{R}^{N}}\frac{|u|^{2}}{|x|^{2a}}\mathrm{dx}\int_{\mathbb{R}^{N}%
}\frac{|\nabla u|^{2}}{|x|^{2b}}\mathrm{dx}\geq C^{2}(N,a,b)\left(
\int_{\mathbb{R}^{N}}\frac{|u|^{2}}{|x|^{a+b+1}}\mathrm{dx}\right)
^{2}\text{, }u\in C_{0}^{\infty}(\mathbb{R}^{N}\setminus\{0\}).\label{CKN}%
\end{equation}
Notice that inequality \eqref{CKN} represents the scalar version of the family
\eqref{type1}. The best constant $C^{2}(N,a,b)>0$ is known and the minimizers
are fully described. We recall that these aspects of \eqref{CKN} were first
studied by Costa in \cite{Co} for a particular range of parameters by using
the \textit{expanding-the-square} method, and then by Catrina and Costa in
\cite{CC} for the full range of parameters using spherical harmonics
decomposition and the Kelvin transform. (An argument without needing spherical harmonics is found in \cite{F20}.)
The obtained results depend on some
parameter regions defined as in
\begin{equation}
\left\{
\begin{array}
[c]{ll}%
\mathcal{A}_{1}:=\{(a,b)\ |\ b+1-a>0,\ b\leq(N-2)/2\} & \\[5pt]%
\mathcal{A}_{2}:=\{(a,b)\ |\ b+1-a<0,\ b\geq(N-2)/2\} & \\[5pt]%
\mathcal{A}:=\mathcal{A}_{1}\cup\mathcal{A}_{2} & \\[5pt]%
\mathcal{B}_{1}:=\{(a,b)\ |\ b+1-a<0,\ b\leq(N-2)/2\} & \\[5pt]%
\mathcal{B}_{2}:=\{(a,b)\ |\ b+1-a>0,\ b\geq(N-2)/2\} & \\[5pt]%
\mathcal{B}:=\mathcal{B}_{1}\cup\mathcal{B}_{2} &
\end{array}.
\right.
\end{equation}
More precisely, it was showed in \cite{CC} that in the region $\mathcal{A}$,
the best constant is $\displaystyle C(N,a,b)=\frac{|N-(a+b+1)|}{2}$ and it is
achieved by the nontrivial functions $\displaystyle u(x)=\gamma\exp\left(
\frac{t|x|^{b+1-a}}{b+1-a}\right)  $, with $t<0$ in $\mathcal{A}_{1}$ and
$t>0$ in $\mathcal{A}_{2}$, and $\gamma$ a nonzero constant. In the region
$\mathcal{B}$, the best constant is $\displaystyle C(N,a,b)=\frac
{|N-(3b-a+3)|}{2}$ and it is achieved by the functions $\displaystyle
u(x)=\gamma|x|^{2(b+1)-N}\exp\left(  \frac{t|x|^{b+1-a}}{b+1-a}\right)  $,
with $t>0$ in $\mathcal{B}_{1}$ and $t<0$ in $\mathcal{B}_{2}$. In addition,
the only values of the parameters where the best constant is not achieved are
those on the line $a=b+1$ where the best constant is $\displaystyle
C(N,b+1,b)=\frac{|N-2(b+1)|}{2}$. In this latter case the CKN inequality
degenerates into a weighted Hardy-Leray type inequality.

Very recently, the authors of the present paper provided in \cite{CFL21} a
very simple and direct proof (bypassing spherical harmonics and the Kelvin transform) of the refined CKN inequality
\begin{equation}
\int_{\mathbb{R}^{N}}\frac{|u|^{2}}{|x|^{2a}}\mathrm{dx}\int_{\mathbb{R}^{N}%
}\frac{|x\cdot\nabla u|^{2}}{|x|^{2b+2}}\mathrm{dx}\geq\tilde{C}%
^{2}(N,a,b)\left(  \int_{\mathbb{R}^{N}}\frac{|u|^{2}}{|x|^{a+b+1}}%
\mathrm{dx}\right)  ^{2}\text{, }u\in C_{0}^{\infty}(\mathbb{R}^{N}%
\setminus\{0\}),\label{CKN_refined}%
\end{equation}
where $\tilde{C}^{2}(N,a,b)$ denotes the sharp constant in
\eqref{CKN_refined}. It was shown in \cite{CFL21} that $\tilde{C}%
^{2}(N,a,b)=C^{2}(N,a,b)$ for the whole range of parameters $(a,b)\in
\mathbb{R}^{2}$. However, inequality \eqref{CKN_refined} requires only the
radial derivative $\partial_{r}:=\frac{x}{|x|}\cdot\nabla$ on the left hand
side instead of the full gradient. Of course, this makes \eqref{CKN_refined}
finer than \eqref{CKN} since $\left\vert \partial_{r}u\right\vert
\leq\left\vert \nabla u\right\vert $. We also characterized in \cite{CFL21}
all the optimizers. The interesting fact which occurs is that the minimizers
of \eqref{CKN_refined} are not necessarily radially symmetric (as happens for
\eqref{CKN}) and they differ from the minimizers of \eqref{CKN} by a
multiplicative function depending only on the spherical component. It is
important to emphasize that the obtained minimizers do not belong to the space
$C_{0}^{\infty}(\mathbb{R}^{N}\setminus\{0\})$ but to the functional spaces
defined as the closure of $C_{0}^{\infty}(\mathbb{R}^{N}\setminus\{0\})$ in
the corresponding energy norm. This was an aspect which is not considered in
\cite{CC} but we explained it in detail in \cite{CFL21}.

Going back to the subclass \eqref{CKN} of CKN inequalities, we notice that it
recovers the mathematical formulation for three famous inequalities of quantum
mechanics, namely the Heisenberg Uncertainty Principle (HUP), the Hydrogen
Uncertainty Principle (HyUP) and the Hardy Inequality (HI). The HUP is obtained
when $a=-1$ and $b=0$: for any $N\geq1$, there holds
\begin{equation}
\int_{\mathbb{R}^{N}}|\nabla u|^{2}\mathrm{dx}\int_{\mathbb{R}^{N}}%
|x|^{2}|u|^{2}\mathrm{dx}\geq\frac{N^{2}}{4}\left(  \int_{\mathbb{R}^{N}%
}|u|^{2}\mathrm{dx}\right)  ^{2}, \quad\forall u\in C_{0}^{\infty}%
(\mathbb{R}^{N}). \label{HUP}%
\end{equation}
It can also be extended to functions $u$ in the Schwartz space $\mathcal{S}%
(\mathbb{R}^{N})$ or in appropriate Sobolev spaces. It can also be verified
that the constant $\displaystyle \frac{N^{2}}{4}$ in the \eqref{HUP} is sharp and
is attainable in $\mathcal{S}(\mathbb{R}^{N})$ by minimizers of the form
$\displaystyle u(x)=\gamma e^{-\beta|x|^{2}}$, $\gamma\in\mathbb{R}$,
$\beta>0$ (see, e.g. \cite{FS97}). When $a=b=0$ we recover the HyUP: for any
$N\geq2$, there holds
\begin{equation}
\int_{\mathbb{R}^{N}}|\nabla u|^{2}\mathrm{dx}\int_{\mathbb{R}^{N}}%
|u|^{2}\mathrm{dx}\geq\frac{\left(  N-1\right)  ^{2}}{4}\left(  \int
_{\mathbb{R}^{N}}\frac{|u|^{2}}{\left\vert x\right\vert }\mathrm{dx}\right)
^{2}, u\in C_{0}^{\infty}(\mathbb{R}^{N}). \label{HyUP}%
\end{equation}
The constant $\displaystyle \frac{\left(  N-1\right)  ^{2}}{4}$ in
\eqref{HyUP} is also optimal and can be attained by minimizers of the form
$\displaystyle u(x)=\gamma e^{-\beta|x|}$, $\gamma\in\mathbb{R}$, $\beta>0$
(see, e.g. \cite{Fra11}). Notice that in this case these minimizers are not in
$\mathcal{S}(\mathbb{R}^{N})$ but are in the Sobolev space $W^{1,2}%
(\mathbb{R}^{N})$.

In the case $a=1$ and $b=0$ we have a degenerate case of \eqref{CKN} which
emerges as the famous HI: for any $N\geq3$, there holds
\begin{equation}
\int_{\mathbb{R}^{N}}|\nabla u|^{2}\mathrm{dx}\geq\frac{\left(  N-2\right)
^{2}}{4}\int_{\mathbb{R}^{N}}\frac{|u|^{2}}{\left\vert x\right\vert ^{2}%
}\mathrm{dx}, \quad\forall u\in C_{0}^{\infty}(\mathbb{R}^{N}). \label{Har}%
\end{equation}
The constant in \eqref{Har} is sharp but not achieved.
There has been considerable interest in the HI, its extension and their applications over the last few decades.
One of the very first application of the HI \eqref{Har} appeared for
$N=3$ in Leray's paper \cite{Ler33} when he studied the Navier-Stokes
equations. Since the HI is not a priority of this paper we avoid to cite
futher references for \eqref{Har}. Inequalities \eqref{HUP}-\eqref{Har} are
independent but all of them can be deduced by applying the divergence theorem
and Holder inequality. On the other hand, it is easy to check that the HI
\eqref{Har} implies \eqref{HUP} with a worse constant, i.e. $\frac{(N-2)^{2}%
}{4}$ instead of $\frac{N^{2}}{4}$.

\paragraph{\textbf{HUP, HyUP and HI for vector fields}}

Motivated by questions in hydrodynamics it is important and interesting to
compute the best constants of functional inequalities such as HI, HUP,
CKN-type inequalities, etc., for vector fields. Special classes of vector
fields which appear frequently in applications are divergence-free vector
fields (a restriction which enhances for instance the Stokes/Navier-Stokes
equations for incompressible fluids) or curl-free vector fields (see e.g.
Maxwell equations of electromagnetism). Here a divergence-free
vector field $\mathbf{U}=\left(  U_{1},...,U_{N}\right)  \in\left(
C_{0}^{\infty}(\mathbb{R}^{N})\right)  ^{N}$ satisfies $\mathrm{\operatorname{div}%
}\mathbf{U}=0$. By a curl-free vector field we understand a vector field of the form
$\mathbf{U}=\nabla u$, where $u: \mathbb{R}^{N}\mapsto\mathbb{C}$ is a scalar
potential field. As a consequence of Cauchy-Schwarz inequality, HUP, HyUP, HI,
$L^{2}$-CKN inequalities for scalar fields transfer easily with the same best
constant to non-restricted vector fields. A key question which arises is
whether the new sharp constant remains the same as in the scalar case or
improves in the case of vector fields with restrictions. Let us next recall
the main results which have been done so far in this direction. If there is no
restriction on the vector field, then we get from (\ref{Har}) that
\begin{equation}
\int_{\mathbb{R}^{N}}|\nabla\mathbf{U}|^{2}\mathrm{dx}\geq\frac{\left(
N-2\right)  ^{2}}{4}\int_{\mathbb{R}^{N}}\frac{|\mathbf{U}|^{2}}{\left\vert
x\right\vert ^{2}}\mathrm{dx} \label{vHar}%
\end{equation}
and the constant $\displaystyle \frac{\left(  N-2\right)  ^{2}}{4}$ is
optimal. In \cite{CM08}, Costin and Maz'ya showed that for divergence-free
vector fields with some additional restrictions (axisymmetric assumption) we
have
\[
\int_{\mathbb{R}^{N}}|\nabla\mathbf{U}|^{2}\mathrm{dx}\geq\frac{\left(
N-2\right)  ^{2}}{4}\left(  1+\frac{8}{N^{2}+4N-4}\right)  \int_{\mathbb{R}%
^{N}}\frac{|\mathbf{U}|^{2}}{\left\vert x\right\vert ^{2}}\mathrm{dx}%
\]
and $\displaystyle\frac{\left(  N-2\right)  ^{2}}{4}\left(  1+\frac{8}%
{N^{2}+4N-4}\right)  $ is sharp. Obviously, this new sharp constant improves
the original optimal constant $\displaystyle\frac{\left(  N-2\right)  ^{2}}%
{4}$ of the Hardy inequality (\ref{vHar}) without restrictions. Recently,
Hamamoto proved in \cite{Ham20} that the additional assumption in \cite{CM08}
that $\mathbf{U}$ is axisymmetric can be removed and we still achieve the same
best constant $\displaystyle\frac{\left(  N-2\right)  ^{2}}{4}\left(
1+\frac{8}{N^{2}+4N-4}\right)  $. The HI for curl-free vector fields is
equivalent to the so-called Hard-Rellich inequality
\[
\int_{\mathbb{R}^{N}}|\Delta u|^{2}\mathrm{dx}\geq\lambda^{\sharp}(N)
\int_{\mathbb{R}^{N}}\frac{|\nabla u|^{2}}{\left\vert x\right\vert ^{2}%
}\mathrm{dx,}%
\]
where $u$ si a potential scalar fields for $\mathbf{U}$, i.e. $\mathbf{U}%
=\nabla u$. The sharp constant $\lambda^{\sharp}(N)$ was determined
progressively depending on the dimension $N$. First it was shown in \cite{TZ}
that $\lambda^{\sharp}(N)=\frac{N^{2}}{4}$ for any $N\geq5$ by using spherical
harmonics decomposition. Later, for $N\in\{3, 4\}$ it was shown that
$\lambda^{\sharp}(3)=\frac{25}{38}$ and $\lambda^{\sharp}(4)=3$ by means of
the Fourier transform (in \cite{B}), Bessel pairs (in \cite{GM}) or an alternative
proof using spherical harmonics decomposition in \cite{C}. Hamamoto and
Takahashi also investigated and determined in \cite{HT19} the sharp constants
of weighted Hardy type inequalities for curl-free vector fields.

In contrast with HI, to our knowledge HUP, HyUP or CKN inequalities for
divergence-free and curl-free vector fields with best constants have been less
investigated so far.

The problem of finding the best constant of HUP when one replaces $u$ in
\eqref{HUP} by a divergence-free vector field $\mathbf{U}$ was posed by Maz'ya
in \cite[Section 3.9]{M}. Indeed, Maz'ya raised the following question in
\cite[Section 3.9]{M}: determine the best constant $\mu^{\ast}(N)$ in the
following inequality
\begin{equation}
\int_{\mathbb{R}^{N}}|\nabla\mathbf{U}|^{2}\mathrm{dx}\int_{\mathbb{R}^{N}%
}|x|^{2}|\mathbf{U}|^{2}\mathrm{dx}\geq\mu^{\ast}(N)\left(  \int
_{\mathbb{R}^{N}}|\mathbf{U}|^{2}\mathrm{dx}\right)  ^{2},\quad\forall
\mathbf{U}\in\left(  C_{0}^{\infty}(\mathbb{R}^{N})\right)  ^{N}%
,\quad\mathrm{\operatorname{div}}\mathbf{U}=0. \label{HUP_div}%
\end{equation}
In the recent paper \cite{CFL20}, among other obtained results we answered in
particular to the Maz'ya question for $N=2$ and we showed that $\mu^{\ast
}(2)=4$. Motivated by the fact that in $\mathbb{R}^{2}$, the divergence-free
vector fields are isometrically isomorphic to the curl-free vector fields in
the two-dimensional case, a divergence-free vector can be written in the form
$\mathbf{U}=(-u_{x_{2}},u_{x_{1}})$ where $u$ is a scalar field. If
$\mathbf{U}\in\left(  C_{0}^{\infty}(\mathbb{R}^{2})\right)  ^{2}$ then also
$u\in C_{0}^{\infty}(\mathbb{R}^{2})$. Then, after integration by parts we get%

\[
\int_{\mathbb{R}^{2}}|\nabla\mathbf{U}|^{2}\mathrm{dx}=\int_{\mathbb{R}^{2}%
}|\Delta u|^{2}\mathrm{dx}.
\]
Therefore \eqref{HUP_div} is equivalent to the following second order CKN-type
inequality in $\mathbb{R}^{2}$:%

\begin{equation}
\int_{\mathbb{R}^{2}}|\Delta u|^{2}\mathrm{dx}\int_{\mathbb{R}^{2}}%
|x|^{2}|\nabla u|^{2}\mathrm{dx}\geq\mu^{\star}(2)\left(  \int_{\mathbb{R}%
^{2}}|\nabla u|^{2}\mathrm{dx}\right)  ^{2}. \label{HUP_div_scalar}%
\end{equation}
Then, by using spherical harmonic decomposition, we showed in \cite{CFL20}
that $\mu^{\star}(2)=4$ is sharp in (\ref{HUP_div_scalar}) and is achieved by
the Gaussian profiles of the form $\displaystyle u(x)=\alpha e^{-\beta|x|^{2}%
}$, $\beta>0$. Therefore $\mu^{\star}(2)=4$ is sharp in (\ref{HUP_div}) and is
attained by the vector fields of the form $\displaystyle \mathbf{U}\left(
x\right)  =\left(  -\alpha e^{-\beta|x|^{2}}x_{2},\alpha e^{-\beta|x|^{2}%
}x_{1}\right)  $, $\beta>0$, $\alpha\in\mathbb{R}$.

Very recently, Hamamoto answered Maz'ya's open question in the remaining case
$N\geq3$ in \cite{Ham21}. More precisely, Hamamoto applied the
poloidal-toroidal decomposition to prove that $\mu^{\ast}(N)=\frac{1}%
{4}\left(  \sqrt{N^{2}-4\left(  N-3\right)  }+2\right)  ^{2}$ when $N\geq3$.

It is worthy to mention that we also proved in \cite{CFL20} the following
sharp HUP for curl-free vector fields: for $\mathbf{U}\in\left(  C_{0}%
^{\infty}(\mathbb{R}^{N})\right)  ^{N}$, $\mathrm{\operatorname{curl}%
}\mathbf{U}=0$, there holds
\begin{equation}
\int_{\mathbb{R}^{N}}|\nabla\mathbf{U}|^{2}\mathrm{dx}\int_{\mathbb{R}^{N}%
}|x|^{2}|\mathbf{U}|^{2}\mathrm{dx}\geq\left(  \frac{N+2}{2}\right)
^{2}\left(  \int_{\mathbb{R}^{N}}|\mathbf{U}|^{2}\mathrm{dx}\right)  ^{2},
\label{HUP_curl}%
\end{equation}
by improving the best constant $\frac{N^{2}}{4}$ which corresponds to scalar
fields in \eqref{HUP}. Indeed, since in this case we can write $\mathbf{U}%
=\nabla u$ for some scalar potential $u:\mathbb{R}^{N}\mapsto\mathbb{C}$,
(\ref{HUP_curl}) is equivalent to the following second order CKN inequality:
\begin{equation}
\int_{\mathbb{R}^{N}}|\Delta u|^{2}\mathrm{dx}\int_{\mathbb{R}^{N}}%
|x|^{2}|\nabla u|^{2}\mathrm{dx}\geq\frac{(N+2)^{2}}{4}\left(  \int
_{\mathbb{R}^{N}}|\nabla u|^{2}\mathrm{dx}\right)  ^{2}.
\label{HUP_curl_scalar}%
\end{equation}
Then, by using spherical harmonics decomposition, we proved in \cite{CFL20}
that the constant $\displaystyle \frac{(N+2)^{2}}{4}$ is optimal in
\eqref{HUP_curl_scalar} and is attained for Gaussian profiles of the form
$\displaystyle u(x)=\alpha e^{-\beta|x|^{2}}$, $\beta>0$. Therefore, the
constant $\displaystyle \left(  \frac{N+2}{2}\right)  ^{2}$ is sharp in
(\ref{HUP_curl}) and is achieved by the vector fields
$\displaystyle \mathbf{U}\left(  x\right)  =\alpha e^{-\beta|x|^{2}}x$,
$\beta>0$.

Along the same line of thought, we proved in addition in \cite{CFL20}\ that
for $N\geq5$, then for $\mathbf{U}\in\left(  C_{0}^{\infty}(\mathbb{R}%
^{N})\right)  ^{N}$, $\mathrm{\operatorname{curl}}\mathbf{U}=0:$
\begin{equation}
\int_{\mathbb{R}^{N}}|\nabla\mathbf{U}|^{2}\mathrm{dx}\int_{\mathbb{R}^{N}%
}|\mathbf{U}|^{2}\mathrm{dx}\geq\frac{(N+1)^{2}}{4}\left(  \int_{\mathbb{R}%
^{N}}\frac{|\mathbf{U}|^{2}}{|x|}\mathrm{dx}\right)  ^{2}. \label{HyUP_curl}%
\end{equation}
Equivalently, that is
\begin{equation}
\int_{\mathbb{R}^{N}}|\Delta u|^{2}\mathrm{dx}\int_{\mathbb{R}^{N}}|\nabla
u|^{2}\mathrm{dx}\geq\frac{(N+1)^{2}}{4}\left(  \int_{\mathbb{R}^{N}}%
\frac{|\nabla u|^{2}}{\left\vert x\right\vert }\mathrm{dx}\right)  ^{2}.
\label{HyUP_curl_scalar}%
\end{equation}
for a scalar potential $u$ corresponding to $\mathbf{U}$. The constant
$\displaystyle \frac{(N+1)^{2}}{4}$ is optimal and it is attained for
functions of the form $\displaystyle u(x)=\alpha\left(  1+\beta\left\vert
x\right\vert \right)  e^{-\beta|x|}$, $\beta>0$. Hence the constant
$\displaystyle \frac{(N+1)^{2}}{4}$ is also optimal in (\ref{HyUP_curl}) and
is attained by $\displaystyle \mathbf{U}(x)=\alpha e^{-\beta\left\vert
x\right\vert }x$, $\beta>0$. Thus, this new constant is larger than the
constant $\frac{(N-1)^{2}}{4}$ in \eqref{HyUP}.

\section{Main results}

The first principal goal of this article is to study the weighted versions of
the HUP and HyUP for curl-free vector fields which are described in
\eqref{type1}. The second goal is to analyze the weighted second order
inequalities in \eqref{type2}. We note that, since we are dealing with weights,
it is not true in general that for curl-free vector fields $\mathbf{U}$ with a
scalar potential $u$ (i.e. $\mathbf{U}=\nabla u$) there holds
\[
\int_{\mathbb{R}^{N}}\frac{|\nabla\mathbf{U}|^{2}}{|x|^{2a}}\mathrm{dx}%
=\int_{\mathbb{R}^{N}}\frac{|\Delta u|^{2}}{|x|^{2a}}\mathrm{dx},
\]
unless $a=0$. Therefore, the family of inequalities in
\eqref{type1}-\eqref{type2} are independent in general when $a\neq0$. That is,
the spherical harmonics decomposition used in \cite{CFL20} is not applicable
in this situation for the family \eqref{type1} and becomes more difficult to
be applied for the family \eqref{type2}.

Therefore, a new approach needs to be used to establish the weighted cases in
\eqref{type1}. In this paper, we will apply a new representation for curl-free
vector fields developed in \cite{HT21} to establish a very general $L^{2}$-CKN
inequality for curl-free vector fields that contains the HUP and HyUP for
curl-free vector fields as specific cases.
To prove the second order inequalities in \eqref{type2} we apply the expanding
square method in a new fashion way being inspired by our previous work
\cite{CFL21} where we did it for first order inequalities.

In order to state the main results we need to introduce some notations and
definitions regarding the functional framework.
Thus, let $X_{a,b}\left(  \mathbb{R}^{N}\right)  $ be the set of vector fields
$\mathbf{U}\in C^{\infty}\left(  \mathbb{R}^{N}\setminus\left\{  0\right\}
\right)  ^{N}$ such that
\[
\int_{\mathbb{R}^{N}}\frac{|\nabla\mathbf{U}|^{2}}{|x|^{2a}}\mathrm{dx}%
<\infty\text{, }\int_{\mathbb{R}^{N}}\frac{|\mathbf{U}|^{2}}{\left\vert
x\right\vert ^{2b}}\mathrm{dx}<\infty\text{, }\int_{\mathbb{R}^{N}}%
\frac{|\mathbf{U}|^{2}}{\left\vert x\right\vert ^{a+b+1}}\mathrm{dx}<\infty,
\]
\[
\lim_{\left\vert x\right\vert \rightarrow0,\infty}\left\vert x\right\vert
^{-1+\frac{N}{2}-a}\mathbf{U}\left(  x\right)  =0
\]
and
\[
\lim_{\left\vert x\right\vert \rightarrow0,\infty}\left\vert x\right\vert
^{-b+\frac{N}{2}}\mathbf{U}\left(  x\right)  =0.
\]
The first main result of this paper is the following $L^{2}$-CKN inequality
for curl-free vector fields:

\begin{theorem}
\label{T1}Let $a,b$ be real numbers such that $\displaystyle\left(  \frac
{N}{2}-a\right)  ^{2}\geq N+1$. We have for $\mathbf{U}\in X_{a,b}\left(
\mathbb{R}^{N}\right)  $ with $\operatorname{curl}\mathbf{U}=0$ that

(1) if $a-b+1>0$, then
\[
\int_{\mathbb{R}^{N}}\frac{|\nabla\mathbf{U}|^{2}}{|x|^{2a}}\mathrm{dx}%
\int_{\mathbb{R}^{N}}\frac{|\mathbf{U}|^{2}}{\left\vert x\right\vert ^{2b}%
}\mathrm{dx}\geq C^{2}(N,a,b) \left(  \int_{\mathbb{R}^{N}}\frac
{|\mathbf{U}|^{2}}{\left\vert x\right\vert ^{a+b+1}}\mathrm{dx}\right)  ^{2},
\]
where
\[
C(N,a,b) = \sqrt{\left(  1-\frac{N}{2}+a\right)  ^{2}+N-1}+\frac{a-b+1}{2}%
\]
is sharp and can be attained by the curl-free vector fields
\[
\displaystyle\mathbf{U}\left(  x\right)  =\gamma\left\vert x\right\vert
^{-\frac{N}{2}+a+\sqrt{\left(  1-\frac{N}{2}+a\right)  ^{2}+N-1}}%
e^{-\frac{\beta}{\left(  a-b+1\right)  }\left\vert x\right\vert ^{\left(
a-b+1\right)  }}x,\quad\gamma\in\mathbb{R}, \quad\beta>0;
\]

(2) if $a-b+1<0$, then%
\[
\int_{\mathbb{R}^{N}}\frac{|\nabla\mathbf{U}|^{2}}{|x|^{2a}}\mathrm{dx}%
\int_{\mathbb{R}^{N}}\frac{|\mathbf{U}|^{2}}{\left\vert x\right\vert ^{2b}%
}\mathrm{dx}\geq C^{2}(N,a,b)\left(  \int_{\mathbb{R}^{N}}\frac{|\mathbf{U}%
|^{2}}{\left\vert x\right\vert ^{a+b+1}}\mathrm{dx}\right)  ^{2},
\]
where
\[
C(N,a,b) = \sqrt{\left(  1-\frac{N}{2}+a\right)  ^{2}+N-1}-\frac{a-b+1}{2}%
\]
is sharp and can be attained by the curl-free vector fields
\[
\displaystyle\mathbf{U}\left(  x\right)  =\gamma\left\vert x\right\vert
^{-\frac{N}{2}+a-\sqrt{\left(  1-\frac{N}{2}+a\right)  ^{2}+N-1}}%
e^{-\frac{\beta}{\left(  a-b+1\right)  }\left\vert x\right\vert ^{\left(
a-b+1\right)  }}x, \quad\gamma\in\mathbb{R}, \quad\beta<0.
\]

\end{theorem}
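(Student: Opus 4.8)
The strategy is to reduce the vector inequality for curl-free fields to a scalar inequality by exploiting Hamamoto's representation of curl-free vector fields from \cite{HT21}, and then to handle the resulting scalar functional by the expanding-the-square method (as in \cite{CFL21, CFL20}). First I would write $\mathbf{U} = \nabla u$ for a scalar potential $u$ and pass to spherical coordinates $r = |x|$, $\omega = x/|x|$. The key input from \cite{HT21} is a pointwise identity relating $|\nabla \mathbf{U}|^2 = |\nabla^2 u|^2$ to its radial and spherical components; in particular one gets a lower bound of the form $\int |x|^{-2a}|\nabla\mathbf{U}|^2 \ge \int |x|^{-2a}\,|\partial_r \mathbf{V}|^2 + (\text{angular terms with a favorable constant})$, where $\mathbf{V}$ is the radial profile of $\mathbf{U}$, and the curl-free constraint forces the angular Laplacian eigenvalue contribution to be at least $N-1$ (this is where the hypothesis $(\tfrac N2 - a)^2 \ge N+1$ will be used, to guarantee the lowest angular mode dominates). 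This converts the problem into: prove the one-dimensional weighted inequality for $g(r) := $ (radial part of $\mathbf{U}$), with the effective angular constant $N-1$ baked in.

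Second, having reduced to a radial/one-dimensional problem, I would run the expanding-the-square argument. Write $f = |x|^{-a}|\nabla\mathbf{U}|$-type quantity and $h = |x|^{-b}|\mathbf{U}|$-type quantity; the middle term $\int |x|^{-(a+b+1)}|\mathbf{U}|^2$ should be recovered (up to sign) from $\int f h$ via Cauchy–Schwarz, so it suffices to bound $\int f h$ from below by a constant times $\int |x|^{-(a+b+1)}|\mathbf{U}|^2$. To do this I would seek a vector field (a "Bessel pair" / first-order differential operator $\mathcal{L}\mathbf{U} = |x|^{-a}\partial_r \mathbf{U} + \text{(zeroth-order radial coefficient)}\,|x|^{-a}\mathbf{U}$) such that $\int |x|^{-2a}|\mathcal{L}\mathbf{U}|^2 \ge 0$ expands, after integration by parts, precisely into the difference $\int |x|^{-2a}|\nabla\mathbf{U}|^2 - (\text{const})\int |x|^{-(a+b+1)}|\mathbf{U}|^2 \cdot (\text{something})$, with the optimal zeroth-order coefficient chosen to make the cross term produce exactly $C(N,a,b)$. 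Concretely the ansatz for the coefficient will be $\big(\tfrac N2 - a - 1 \pm \sqrt{(1-\tfrac N2 + a)^2 + N - 1}\big)/r$ plus the $\beta r^{a-b}$ piece coming from the other factor; matching the boundary terms (which vanish by the decay conditions in $X_{a,b}$) gives the inequality with $C(N,a,b) = \sqrt{(1-\tfrac N2 + a)^2 + N - 1} + \tfrac{a-b+1}{2}$ when $a-b+1>0$ and with the minus sign when $a-b+1<0$, corresponding to the two choices of $\pm$ and the sign of $\beta$.

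Third, for sharpness and the description of extremizers, I would verify directly that the proposed fields
\[
\mathbf{U}(x) = \gamma |x|^{-\frac N2 + a \pm \sqrt{(1-\frac N2+a)^2 + N-1}} \, e^{-\frac{\beta}{a-b+1}|x|^{a-b+1}}\, x
\]
are curl-free (immediate, since they are radial scalar functions times $x$, hence gradients of radial potentials), lie in $X_{a,b}(\mathbb{R}^N)$ for the stated sign of $\beta$ (checking the three finiteness integrals and the two limit conditions — this is a routine but necessary exponent bookkeeping, where the decay at infinity needs $\beta>0$ in case (1) and $\beta<0$ in case (2), and the behavior at $0$ needs the chosen power $-\tfrac N2 + a \pm \sqrt{\cdots}$), and that for these fields both the Cauchy–Schwarz step and the expand-the-square step hold with equality, giving the claimed value of the sharp constant. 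One should also address that these extremizers are not in $C_0^\infty(\mathbb{R}^N\setminus\{0\})$ but in the closure of that space under the relevant energy norm, exactly as discussed for \eqref{CKN_refined} in \cite{CFL21}, so a density argument is needed to legitimately call $C(N,a,b)$ the sharp constant over the original test-function class.

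**Main obstacle.** The crux is the reduction step: obtaining from Hamamoto's representation a clean pointwise lower bound for $|x|^{-2a}|\nabla\mathbf{U}|^2$ in terms of a purely radial quantity plus a nonnegative angular remainder with the correct constant $N-1$, and checking that the hypothesis $(\tfrac N2 - a)^2 \ge N+1$ is exactly what makes the zeroth angular mode the worst case (so that no better constant comes from higher spherical harmonics). Once the problem is genuinely one-dimensional, the expand-the-square computation is mechanical; but identifying the right first-order operator $\mathcal{L}$ (equivalently, the right Bessel pair) so that all cross terms telescope and the boundary terms vanish under the $X_{a,b}$ decay conditions is where the delicate algebra lives, and getting the two branches (sign of $a-b+1$, sign of $\beta$, choice of $\pm$ in the exponent) to line up consistently is the part most prone to sign errors.
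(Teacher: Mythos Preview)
Your proposal is correct and follows essentially the same route as the paper: Hamamoto's representation for curl-free fields gives, after passing to the conformal variable $t=\ln r$ and substituting $\mathbf{V}=r^{1-\lambda}\mathbf{U}$ with $\lambda=2-\tfrac{N}{2}+a$, a lower bound $\int|x|^{-2a}|\nabla\mathbf{U}|^2\ge\int|\partial_t\mathbf{V}|^2+\big((\lambda-1)^2+N-1\big)\int|\mathbf{V}|^2$ under the hypothesis $(\tfrac{N}{2}-a)^2\ge N+1$, and then the expand-the-square identity $\int|\partial_t\mathbf{V}+\alpha\mathbf{V}+\beta e^{(a-b+1)t}\mathbf{V}|^2\ge 0$ with $\alpha=\mp\sqrt{(1-\tfrac{N}{2}+a)^2+N-1}$ yields the product inequality via the discriminant in $\beta$; the extremizer verification is exactly as you describe. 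The only point worth tightening is that the hypothesis does not make ``the zeroth angular mode the worst case'' per se---rather, it ensures the $\varphi$-dependent remainder term $(N-1)\int\big(((\lambda-2)^2-N-1)|\varphi|^2+|\partial_t\varphi|^2\big)$ is nonnegative and can be dropped (the $\varphi$ part already lives in the $\kappa\ge 1$ eigenspaces since $\int_{\mathbb{S}^{N-1}}\varphi\,d\sigma=0$).
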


Here are some direct consequences of our Theorem \ref{T1}. Let $b=-a-1$, we
get the following weighted HUP for curl-free vector fields:

\begin{corollary}
\label{T1.1}Let $a$ be such that $a>-1$ and $\displaystyle\left(  \frac{N}%
{2}-a\right)  ^{2}\geq N+1$. Then we have for $\mathbf{U}\in X_{a,-a-1}\left(
\mathbb{R}^{N}\right)  $ with $\operatorname{curl}\mathbf{U}=0$ that
\[
\int_{\mathbb{R}^{N}}\frac{|\nabla\mathbf{U}|^{2}}{|x|^{2a}}\mathrm{dx}%
\int_{\mathbb{R}^{N}}|x|^{2a+2}|\mathbf{U}|^{2}\mathrm{dx}\geq C^{2}(N,a,-a-1)
\left(  \int_{\mathbb{R}^{N}}|\mathbf{U}|^{2}\mathrm{dx}\right)  ^{2},
\]
where
\[
C(N,a,-a-1) = \sqrt{\left(  1-\frac{N}{2}+a\right)  ^{2}+N-1}+1+a
\]
is sharp and can be attained by curl-free vector fields
\[
\displaystyle\mathbf{U}\left(  x\right)  =\gamma\left\vert x\right\vert
^{-\frac{N}{2}+a+\sqrt{\left(  1-\frac{N}{2}+a\right)  ^{2}+N-1}}%
e^{-\frac{\beta}{2\left(  a+1\right)  }\left\vert x\right\vert ^{2\left(
a+1\right)  }}x,\quad\gamma\in\mathbb{R}, \quad\beta>0.
\]

\end{corollary}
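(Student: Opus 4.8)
The plan is to obtain Corollary \ref{T1.1} as the direct specialization $b=-a-1$ of Theorem \ref{T1}; there is essentially nothing to prove beyond checking that this substitution lands us in the correct branch of the theorem and then bookkeeping the resulting expressions.

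First I would observe that, by definition, $X_{a,-a-1}(\mathbb{R}^{N})$ is precisely $X_{a,b}(\mathbb{R}^{N})$ with $b=-a-1$, and that the hypothesis $\left(\frac{N}{2}-a\right)^{2}\ge N+1$ is imposed in both statements, so the standing assumptions of Theorem \ref{T1} are met. The only point requiring attention is which of the two cases of Theorem \ref{T1} applies. With $b=-a-1$ one computes
\[
a-b+1=a-(-a-1)+1=2(a+1),
\]
which is strictly positive exactly when $a>-1$ — and this is precisely the extra hypothesis in Corollary \ref{T1.1}. Hence we are in case (1) of Theorem \ref{T1}. The borderline $a=-1$, corresponding to the degenerate case $a-b+1=0$ not covered by Theorem \ref{T1}, is excluded, and the regime $a<-1$, which would put us in case (2), does not arise.

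It then remains to read off the three integrals, the sharp constant and the extremal fields. Since $2b=-2a-2$ and $a+b+1=0$, the weighted quantities in Theorem \ref{T1} become
\[
\int_{\mathbb{R}^{N}}\frac{|\mathbf{U}|^{2}}{|x|^{2b}}\mathrm{dx}=\int_{\mathbb{R}^{N}}|x|^{2a+2}|\mathbf{U}|^{2}\mathrm{dx},\qquad \int_{\mathbb{R}^{N}}\frac{|\mathbf{U}|^{2}}{|x|^{a+b+1}}\mathrm{dx}=\int_{\mathbb{R}^{N}}|\mathbf{U}|^{2}\mathrm{dx},
\]
so the inequality of case (1) of Theorem \ref{T1} becomes verbatim the inequality claimed in Corollary \ref{T1.1}. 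Likewise, the constant $C(N,a,b)=\sqrt{\left(1-\tfrac{N}{2}+a\right)^{2}+N-1}+\tfrac{a-b+1}{2}$ becomes $\sqrt{\left(1-\tfrac{N}{2}+a\right)^{2}+N-1}+(a+1)$ upon inserting $a-b+1=2(a+1)$, which is exactly $C(N,a,-a-1)$; sharpness is inherited from Theorem \ref{T1}.

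Finally, in the extremal curl-free field of case (1) the factor $\exp\!\left(-\frac{\beta}{a-b+1}|x|^{a-b+1}\right)$ turns into $\exp\!\left(-\frac{\beta}{2(a+1)}|x|^{2(a+1)}\right)$ with $\beta>0$, while the power $|x|^{-\frac{N}{2}+a+\sqrt{(1-\frac{N}{2}+a)^{2}+N-1}}\,x$ is unchanged, so the optimizers coincide with those asserted in Corollary \ref{T1.1}. Since the whole argument is a pure substitution into Theorem \ref{T1}, there is no genuine obstacle; the one thing to be careful about is exactly the elementary verification above that $a>-1$ is the precise sign condition forcing case (1) rather than case (2).
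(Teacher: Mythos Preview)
Your proposal is correct and follows exactly the paper's approach: the paper simply states that Corollary~\ref{T1.1} is obtained from Theorem~\ref{T1} by setting $b=-a-1$, and your write-up carries out precisely this substitution, including the check that $a>-1$ places us in case~(1).
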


When $b=-a$, we obtain the weighted HyUP for curl-free vector fields:

\begin{corollary}
\label{T1.2}Let $a$ be such that $a>-\frac{1}{2}$ and $\displaystyle\left(
\frac{N}{2}-a\right)  ^{2}\geq N+1$. Then we have for $\mathbf{U}\in
X_{a,-a}\left(  \mathbb{R}^{N}\right)  $ with $\operatorname{curl}%
\mathbf{U}=0$ that
\[
\int_{\mathbb{R}^{N}}\frac{|\nabla\mathbf{U}|^{2}}{|x|^{2a}}\mathrm{dx}%
\int_{\mathbb{R}^{N}}|x|^{2a}|\mathbf{U}|^{2}\mathrm{dx}\geq C^{2}(N,a,-a)
\left(  \int_{\mathbb{R}^{N}}\frac{\left\vert \mathbf{U}\right\vert ^{2}}%
{|x|}\mathrm{dx}\right)  ^{2}%
\]
where
\[
C(N,a,-a) = \sqrt{\left(  1-\frac{N}{2}+a\right)  ^{2}+N-1}+\frac{1}{2}+a
\]
is sharp and can be attained by curl-free vector fields
\[
\displaystyle\mathbf{U}\left(  x\right)  =\gamma\left\vert x\right\vert
^{-\frac{N}{2}+a+\sqrt{\left(  1-\frac{N}{2}+a\right)  ^{2}+N-1}}%
e^{-\frac{\beta}{2a+1}\left\vert x\right\vert ^{2a+1}}x,\quad\gamma
\in\mathbb{R},\quad\beta>0.
\]

\end{corollary}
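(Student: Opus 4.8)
The plan is to derive Corollary~\ref{T1.2} as the special case $b=-a$ of Theorem~\ref{T1}; the argument therefore reduces to verifying that the hypotheses of the theorem hold under the present assumptions and to simplifying the resulting constant, weights, and extremal family.

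First I would note that taking $b=-a$ gives $a-b+1=2a+1$, so the standing assumption $a>-\frac{1}{2}$ is precisely the condition $a-b+1>0$; hence the parameters lie in the regime covered by part~(1) of Theorem~\ref{T1}. The second hypothesis $\left(\frac{N}{2}-a\right)^{2}\geq N+1$ is imposed verbatim in both statements. Consequently Theorem~\ref{T1}(1) applies to every curl-free $\mathbf{U}\in X_{a,-a}(\mathbb{R}^{N})$.

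Next I would substitute $b=-a$ throughout the conclusion of Theorem~\ref{T1}(1). The two weights become $\left\vert x\right\vert^{-2b}=\left\vert x\right\vert^{2a}$ and $\left\vert x\right\vert^{-(a+b+1)}=\left\vert x\right\vert^{-1}$, which turns the inequality of Theorem~\ref{T1}(1) into exactly the displayed inequality of the corollary. The sharp constant becomes $C(N,a,-a)=\sqrt{\left(1-\frac{N}{2}+a\right)^{2}+N-1}+\frac{a-b+1}{2}=\sqrt{\left(1-\frac{N}{2}+a\right)^{2}+N-1}+\frac{1}{2}+a$, as claimed. For the extremals, $a-b+1=2a+1$ gives $-\frac{\beta}{a-b+1}\left\vert x\right\vert^{a-b+1}=-\frac{\beta}{2a+1}\left\vert x\right\vert^{2a+1}$, so the family from Theorem~\ref{T1}(1) specializes to $\mathbf{U}(x)=\gamma\left\vert x\right\vert^{-\frac{N}{2}+a+\sqrt{\left(1-\frac{N}{2}+a\right)^{2}+N-1}}e^{-\frac{\beta}{2a+1}\left\vert x\right\vert^{2a+1}}x$ with $\gamma\in\mathbb{R}$, $\beta>0$. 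These fields are curl-free because each is the gradient of a radial scalar potential, and their membership in the closure of $C_{0}^{\infty}(\mathbb{R}^{N}\setminus\{0\})$ in the relevant energy norm, as well as the attainment of equality, are inherited from the proof of Theorem~\ref{T1}.

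Since the corollary is a direct specialization, I do not expect any genuine obstacle. The only point that needs a moment's care is the strictness in $a>-\frac{1}{2}$: this is exactly what keeps the parameters inside case~(1) and away from the degenerate line $a-b+1=0$ (equivalently $a=-\frac{1}{2}$ when $b=-a$), on which Theorem~\ref{T1} provides no conclusion. If one instead wanted a self-contained argument, one could rerun the proof of Theorem~\ref{T1} with $b=-a$ inserted from the start, but this would only reproduce the same computation.
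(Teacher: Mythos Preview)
Your proposal is correct and matches the paper's approach exactly: the corollary is presented there as a direct consequence of Theorem~\ref{T1} by setting $b=-a$, with no separate proof given. The only minor imprecision is your description of the extremals' membership as being ``in the closure of $C_{0}^{\infty}(\mathbb{R}^{N}\setminus\{0\})$ in the relevant energy norm''; in the paper the space $X_{a,b}(\mathbb{R}^{N})$ is defined directly by smoothness, integrability, and pointwise decay conditions, and it is membership in this space (verified in the proof of Theorem~\ref{T1}) that you should invoke.
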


In particular, when $a=0$, we recover the HUP (\ref{HUP_curl}) and HyUP
(\ref{HyUP_curl}) for curl-free vector fields.

Obviously, using $\mathbf{U}=\nabla u$, Theorem \ref{T1} yields

\begin{corollary}
\label{T1.3}Let $a,b$ be real numbers such that $\displaystyle\left(  \frac
{N}{2}-a\right)  ^{2}\geq N+1$. We have for $u$ such that $\nabla u\in
X_{a,b}\left(  \mathbb{R}^{N}\right)  $ that

(1) if $a-b+1>0$, then
\[
\int_{\mathbb{R}^{N}}\frac{|D^{2}u|^{2}}{|x|^{2a}}\mathrm{dx}\int
_{\mathbb{R}^{N}}\frac{|\nabla u|^{2}}{\left\vert x\right\vert ^{2b}%
}\mathrm{dx}\geq C^{2}(N,a,b) \left(  \int_{\mathbb{R}^{N}}\frac{|\nabla
u|^{2}}{\left\vert x\right\vert ^{a+b+1}}\mathrm{dx}\right)  ^{2},
\]
where
\[
C(N,a,b) = \sqrt{\left(  1-\frac{N}{2}+a\right)  ^{2}+N-1}+\frac{a-b+1}{2}%
\]
is sharp and can be attained by the functions $u$ such that
\[
\displaystyle\nabla u= \gamma\left\vert x\right\vert ^{-\frac{N}{2}%
+a+\sqrt{\left(  1-\frac{N}{2}+a\right)  ^{2}+N-1}}e^{-\frac{\beta}{\left(
a-b+1\right)  }\left\vert x\right\vert ^{\left(  a-b+1\right)  }}x,\quad
\gamma\in\mathbb{R}, \quad\beta>0;
\]

(2) if $a-b+1<0$, then
\[
\int_{\mathbb{R}^{N}}\frac{|D^{2}u|^{2}}{|x|^{2a}}\mathrm{dx}\int
_{\mathbb{R}^{N}}\frac{|\nabla u|^{2}}{\left\vert x\right\vert ^{2b}%
}\mathrm{dx}\geq C^{2}(N,a,b) \left(  \int_{\mathbb{R}^{N}}\frac{|\nabla
u|^{2}}{\left\vert x\right\vert ^{a+b+1}}\mathrm{dx}\right)  ^{2},
\]
where
\[
C(N,a,b) = \sqrt{\left(  1-\frac{N}{2}+a\right)  ^{2}+N-1}-\frac{a-b+1}{2}%
\]
is sharp and can be attained by the functions $u$ such that
\[
\displaystyle\nabla u=\gamma\left\vert x\right\vert ^{-\frac{N}{2}%
+a-\sqrt{\left(  1-\frac{N}{2}+a\right)  ^{2}+N-1}}e^{-\frac{\beta}{\left(
a-b+1\right)  }\left\vert x\right\vert ^{\left(  a-b+1\right)  }}x,\quad
\gamma\in\mathbb{R},\quad\beta<0.
\]

\end{corollary}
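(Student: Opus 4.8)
The plan is to obtain Corollary \ref{T1.3} as an immediate specialization of Theorem \ref{T1} to the case $\mathbf{U}=\nabla u$. First I would note that a gradient field is automatically curl-free, so if $\nabla u\in X_{a,b}(\mathbb{R}^{N})$ then $\mathbf{U}:=\nabla u$ is an admissible competitor in Theorem \ref{T1}; in particular the hypothesis $\left(\frac{N}{2}-a\right)^{2}\geq N+1$ and the functional-space membership carry over verbatim, since they are built into the assumption $\nabla u\in X_{a,b}(\mathbb{R}^{N})$, and there is nothing further to verify.

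Next I would record the two elementary identities that translate the vector-field quantities into scalar-field quantities. Writing $\mathbf{U}=\nabla u=(\partial_{1}u,\dots,\partial_{N}u)$, the Jacobian matrix of $\mathbf{U}$ is exactly the Hessian $D^{2}u=(\partial_{i}\partial_{j}u)_{i,j}$, so with the Hilbert--Schmidt (Frobenius) norm used throughout one has $|\nabla\mathbf{U}|^{2}=|D^{2}u|^{2}$, while trivially $|\mathbf{U}|^{2}=|\nabla u|^{2}$. Substituting these into parts (1) and (2) of Theorem \ref{T1} reproduces verbatim parts (1) and (2) of Corollary \ref{T1.3}, with the same constants $C(N,a,b)$ in the two regimes $a-b+1>0$ and $a-b+1<0$.

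For sharpness and the description of extremizers, I would observe that the extremal fields in Theorem \ref{T1} have the form $\mathbf{U}(x)=\gamma\,|x|^{\sigma}\,x\,e^{-\frac{\beta}{a-b+1}|x|^{a-b+1}}$ for a suitable exponent $\sigma$, i.e. $\mathbf{U}(x)=h(|x|)\,\frac{x}{|x|}$ for a scalar radial profile $h$. Such a radially directed field admits a globally defined (single-valued) radial potential on $\mathbb{R}^{N}\setminus\{0\}$: taking $u(x)=\Phi(|x|)$ with $\Phi'=h$ yields $\nabla u=\mathbf{U}$. Hence the extremizers of Theorem \ref{T1} are genuine gradients, so the constant $C(N,a,b)$ in Corollary \ref{T1.3} is likewise attained, the extremal $u$ being characterized precisely by the condition that $\nabla u$ equal the displayed field; sharpness is inherited since $C(N,a,b)$ was already shown sharp in Theorem \ref{T1}.

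This is a direct corollary and presents no genuine obstacle; the only two points deserving a word of care are the identification $|\nabla\mathbf{U}|=|D^{2}u|$ (immediate once the Frobenius convention is fixed) and the fact that the extremal fields of Theorem \ref{T1} are not merely curl-free but actually gradients — which holds because they are radially directed and therefore possess a single-valued radial potential on $\mathbb{R}^{N}\setminus\{0\}$.
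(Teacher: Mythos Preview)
Your proposal is correct and follows exactly the approach the paper takes: the paper derives Corollary \ref{T1.3} by the single sentence ``Obviously, using $\mathbf{U}=\nabla u$, Theorem \ref{T1} yields'', and your argument spells out precisely the two identifications $|\nabla\mathbf{U}|^{2}=|D^{2}u|^{2}$, $|\mathbf{U}|^{2}=|\nabla u|^{2}$ that make this substitution work. Your additional remark that the radially directed extremal fields of Theorem \ref{T1} admit a single-valued radial potential (hence are genuine gradients) is a useful clarification the paper leaves implicit.
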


In particular, when $a=0$, we have that

\begin{corollary}
\label{T1.4}Let $N\geq5$. We have for $u$ such that $\nabla u\in
X_{0,b}\left(  \mathbb{R}^{N}\right)  $ that

(1) if $b<1$, then
\[
\int_{\mathbb{R}^{N}}|\Delta u|^{2}\mathrm{dx}\int_{\mathbb{R}^{N}}%
\frac{|\nabla u|^{2}}{\left\vert x\right\vert ^{2b}}\mathrm{dx}\geq\left(
\frac{N-b+1}{2}\right)  ^{2}\left(  \int_{\mathbb{R}^{N}}\frac{|\nabla u|^{2}%
}{\left\vert x\right\vert ^{b+1}}\mathrm{dx}\right)  ^{2},
\]
where the constant $\displaystyle\left(  \frac{N-b+1}{2}\right)  ^{2}$ is
sharp and can be attained by the functions $u$ such that $\displaystyle\nabla
u=$ $\gamma e^{-\frac{\beta}{1-b}\left\vert x\right\vert ^{1-b}}x$, $\gamma\in%
\mathbb{R}
,$ $\beta>0$;

(2) if $b>1$, then%
\[
\int_{\mathbb{R}^{N}}|\Delta u|^{2}\mathrm{dx}\int_{\mathbb{R}^{N}}%
\frac{|\nabla u|^{2}}{\left\vert x\right\vert ^{2b}}\mathrm{dx}\geq\left(
\frac{N+b-1}{2}\right)  ^{2}\left(  \int_{\mathbb{R}^{N}}\frac{|\nabla u|^{2}%
}{\left\vert x\right\vert ^{b+1}}\mathrm{dx}\right)  ^{2},
\]
where the constant $\displaystyle\left(  \frac{N+b-1}{2}\right)  ^{2}$ is
sharp and can be attained by the functions $u$ such that $\displaystyle\nabla
u=\gamma\left\vert x\right\vert ^{-N}e^{-\frac{\beta}{1-b}\left\vert
x\right\vert ^{1-b}}x$, $\gamma\in%
\mathbb{R}
,$ $\beta<0$.
\end{corollary}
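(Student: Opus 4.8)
The plan is to derive Corollary~\ref{T1.4} by specializing Corollary~\ref{T1.3} (which is Theorem~\ref{T1} applied to $\mathbf U=\nabla u$) to the parameter value $a=0$, and then trading the full Hessian for the Laplacian.

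First I would check that the standing hypothesis $\left(\tfrac N2-a\right)^2\ge N+1$ reduces, at $a=0$, to $\tfrac{N^2}{4}\ge N+1$, i.e. $N^2-4N-4\ge0$; since $N$ is an integer $\ge2$ and the positive root of $t^2-4t-4$ is $2+2\sqrt2\approx4.83$, this is precisely the assumption $N\ge5$ of the corollary. Next I would simplify the radical that appears throughout Theorem~\ref{T1}: at $a=0$,
\[
\left(1-\tfrac N2\right)^2+N-1=\tfrac{N^2}{4},\qquad\text{so}\qquad\sqrt{\left(1-\tfrac N2\right)^2+N-1}=\tfrac N2 .
\]
Plugging this into the constants of Corollary~\ref{T1.3} gives $C(N,0,b)=\tfrac N2+\tfrac{1-b}{2}=\tfrac{N-b+1}{2}$ when $a-b+1=1-b>0$ (the case $b<1$), and $C(N,0,b)=\tfrac N2-\tfrac{1-b}{2}=\tfrac{N+b-1}{2}$ when $1-b<0$ (the case $b>1$); squaring reproduces the constants $\left(\tfrac{N-b+1}{2}\right)^2$ and $\left(\tfrac{N+b-1}{2}\right)^2$ in the statement. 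Likewise the exponents $-\tfrac N2+a\pm\sqrt{(1-\tfrac N2+a)^2+N-1}$ of the extremal curl-free fields in Theorem~\ref{T1} become $0$ (with the $+$ sign) and $-N$ (with the $-$ sign), so that, writing $\mathbf U=\nabla u$, the extremizers provided by Theorem~\ref{T1} are exactly those $u$ with $\nabla u=\gamma e^{-\frac{\beta}{1-b}|x|^{1-b}}x$ ($\beta>0$, for $b<1$) and $\nabla u=\gamma|x|^{-N}e^{-\frac{\beta}{1-b}|x|^{1-b}}x$ ($\beta<0$, for $b>1$), as claimed. Sharpness is inherited directly from Theorem~\ref{T1}.

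It remains to replace $|D^2u|^2$ by $|\Delta u|^2$ on the left-hand side. For $u$ smooth and compactly supported in $\mathbb R^N\setminus\{0\}$, integrating by parts twice yields
\[
\int_{\mathbb R^N}|D^2u|^2\,\mathrm dx=\sum_{i,j}\int_{\mathbb R^N}\partial_{ij}u\,\partial_{ij}u\,\mathrm dx=\sum_{i,j}\int_{\mathbb R^N}\partial_{ii}u\,\partial_{jj}u\,\mathrm dx=\int_{\mathbb R^N}|\Delta u|^2\,\mathrm dx ,
\]
with no boundary terms; this is the identity already singled out in the discussion preceding Theorem~\ref{T1} (valid precisely because $a=0$). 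Granting it for $u$ with $\nabla u\in X_{0,b}(\mathbb R^N)$, Corollary~\ref{T1.3} at $a=0$ becomes Corollary~\ref{T1.4}. The one point that calls for care — and which I view as the main, if mild, obstacle — is to justify this identity in the class $\nabla u\in X_{0,b}(\mathbb R^N)$ rather than only for $u\in C_0^\infty(\mathbb R^N\setminus\{0\})$: one integrates over annuli $\{\varepsilon<|x|<R\}$ and checks that the boundary integrals over $\{|x|=\varepsilon\}$ and $\{|x|=R\}$ vanish as $\varepsilon\to0$, $R\to\infty$, which follows from the integrability $\int|D^2u|^2<\infty$ together with the decay conditions $|x|^{-1+N/2}\nabla u\to0$ and $|x|^{-b+N/2}\nabla u\to0$ that are built into the definition of $X_{a,b}$ (a standard cutoff argument then delivers the limit).
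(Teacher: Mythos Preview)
Your proposal is correct and follows exactly the paper's route: Corollary~\ref{T1.4} is stated there simply as the case $a=0$ of Corollary~\ref{T1.3}, using the identity $\int|\nabla\mathbf U|^2=\int|\Delta u|^2$ for $\mathbf U=\nabla u$ that the paper highlights as the special (unweighted) feature of $a=0$. Your explicit verification of the hypothesis $N\ge5$, the simplification of the constants and exponents, and your remark on justifying the Hessian--Laplacian identity in the class $X_{0,b}$ are all details that the paper leaves implicit.
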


As mentioned earlier, it is not true in general that for $\mathbf{U}=\nabla
u$
\[
\int_{\mathbb{R}^{N}}\frac{|\nabla\mathbf{U}|^{2}}{|x|^{2a}}\mathrm{dx}%
=\int_{\mathbb{R}^{N}}\frac{|\Delta u|^{2}}{|x|^{2a}}\mathrm{dx}.
\]
Therefore, Theorem \ref{T1} does not imply weighted versions of the second
order HUP \eqref{HUP_curl_scalar} or HyUP \eqref{HyUP_curl_scalar}. That is,
Theorem \ref{T1} does not imply that%
\[
\int_{\mathbb{R}^{N}}\frac{|\Delta u|^{2}}{|x|^{2a}}\mathrm{dx}\int
_{\mathbb{R}^{N}}|x|^{2a+2}\left\vert \nabla u\right\vert ^{2}\mathrm{dx}\geq
C\left(  N,a\right)  \left(  \int_{\mathbb{R}^{N}}|\nabla u|^{2}%
\mathrm{dx}\right)  ^{2}.
\]
Motivated by this observation, the second goal of our paper is to study the
weighted second order inequalities in \eqref{type2}. First, let $Y_{a}\left(
\mathbb{R}^{N}\right)  $ be the set of scalar-valued functions $u\in
C^{\infty}\left(  \mathbb{R}^{N}\setminus\left\{  0\right\}  \right)  $ such
that
\begin{equation}
\int_{\mathbb{R}^{N}}\frac{|\Delta u|^{2}}{|x|^{2a}}\mathrm{dx}<\infty
,\int_{\mathbb{R}^{N}}|x|^{2a+2}\left\vert \frac{x}{\left\vert x\right\vert
}\cdot\nabla u\right\vert ^{2}\mathrm{dx}<\infty,\int_{\mathbb{R}^{N}}|\nabla
u|^{2}\mathrm{dx}<\infty,
\end{equation}%
\[
\lim_{\left\vert x\right\vert \rightarrow0,\infty}\left\vert x\right\vert
^{N-1}\left\vert u\left(  x\right)  \right\vert ^{2}=0,
\]%
\[
\lim_{\left\vert x\right\vert \rightarrow0,\infty}\left\vert x\right\vert
^{N}\left\vert \frac{x}{|x|}\cdot\nabla u\left(  x\right)  \right\vert
^{2}=0.
\]
and
\[
\lim_{\left\vert x\right\vert \rightarrow0,\infty}\left\vert x\right\vert
^{2a+N}\left\vert u\left(  x\right)  \right\vert ^{2}=0.
\]
Then we will prove that

\begin{theorem}
\label{T2} Let $N\geq1$ and $a\in\mathbb{R}$. For all $u\in Y_{a}\left(
\mathbb{R}^{N}\right)  $, there holds
\begin{equation}
\int_{\mathbb{R}^{N}}\frac{|\Delta u|^{2}}{|x|^{2a}}\mathrm{dx}\int
_{\mathbb{R}^{N}}|x|^{2a+2}\left\vert \frac{x}{\left\vert x\right\vert }%
\cdot\nabla u\right\vert ^{2}\mathrm{dx}\geq\frac{(N+2+4a)^{2}}{4}\left(
\int_{\mathbb{R}^{N}}|\nabla u|^{2}\mathrm{dx}\right)  ^{2}.\quad
\label{wHUP_curl_scalar}%
\end{equation}
If either $\min\left\{  a+1,N+2+4a\right\}  >0$ or $\max\left\{
a+1,N+2+4a\right\}  <0$, then the constant $\displaystyle\frac{(N+2+4a)^{2}%
}{4}$ is optimal in (\ref{wHUP_curl_scalar}) and is attained by functions of
the form
\begin{equation}
u(x)=\gamma e^{-\beta|x|^{2(1+a)}},\text{ }\gamma\in%
\mathbb{R}
,\text{ }\beta>0.\label{minimizers}%
\end{equation}
If $N\geq2$ and either $a+1>0$ or $N+2a<0$, the equality also happens in
(\ref{wHUP_curl_scalar}) for infinitely many nonradial functions of the
form%
\begin{align*}
u(x) &  =|x|^{\alpha}{}_{1}F_{1}\left(  \frac{\alpha+N+2a}{2a+2};\frac
{2\alpha+2a+N}{2a+2};-\frac{t}{2a+2}|x|^{2a+2}\right)  g\left(\frac{x}{|x|}\right),\\
\alpha &  =\frac{2-N+\operatorname{sgn}(a+1)\sqrt{(N-2)^{2}-4\lambda}}{2}.
\end{align*}
with $t\in\mathbb{R}$ such that $\frac{t}{2a+2}>0$, $\Delta_{\sigma}g=\lambda
g$ for some $\lambda=-k\left(  N+k-2\right)  $, $k=1,2,...$, and $_{1}%
F_{1}(A;B;z)$ are the Kummer's confluent hypergeometric functions, that are
the solutions to Kummer's equation
\begin{equation}
z\frac{d^{2}w}{dz^{2}}+(B-z)\frac{dw}{dz}-Aw=0.
\end{equation}

\end{theorem}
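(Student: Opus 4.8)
The plan is to read \eqref{wHUP_curl_scalar} as a single Cauchy--Schwarz inequality, in the weighted Hilbert space $L^{2}(\mathbb{R}^{N},|x|^{-2a}\,\mathrm{dx})$, between $\Delta u$ and the first order operator
\[
\mathcal{D}u:=\operatorname{div}\bigl(|x|^{2a}xu\bigr)=(N+2a)\,|x|^{2a}u+|x|^{2a+1}\,\tfrac{x}{|x|}\cdot\nabla u,
\]
which is the ``expanding the square'' philosophy of \cite{CFL21} transplanted to second order. For $u\in Y_{a}(\mathbb{R}^{N})$ I would first establish, by integration by parts, the two identities
\[
\int_{\mathbb{R}^{N}}\frac{|\mathcal{D}u|^{2}}{|x|^{2a}}\,\mathrm{dx}=\int_{\mathbb{R}^{N}}|x|^{2a+2}\bigl|\tfrac{x}{|x|}\cdot\nabla u\bigr|^{2}\mathrm{dx},\qquad\int_{\mathbb{R}^{N}}\frac{\Delta u\,\mathcal{D}u}{|x|^{2a}}\,\mathrm{dx}=-\frac{N+2+4a}{2}\int_{\mathbb{R}^{N}}|\nabla u|^{2}\mathrm{dx};
\]
all the integrations by parts are legitimate because the finiteness and boundary-decay conditions built into $Y_{a}$ force the stray boundary integrals to vanish along suitable sequences of radii tending to $0$ and $\infty$. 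The first identity is the computational heart of the matter: after expanding the left side, the contributions $(N+2a)^{2}\int|x|^{2a}u^{2}$ and $2(N+2a)\int|x|^{2a}u\,(x\cdot\nabla u)$ cancel, since $\int|x|^{2a}x\cdot\nabla(u^{2})=-\int u^{2}\operatorname{div}(|x|^{2a}x)=-(N+2a)\int|x|^{2a}u^{2}$, leaving only $\int|x|^{2a}(x\cdot\nabla u)^{2}=\int|x|^{2a+2}|\tfrac{x}{|x|}\cdot\nabla u|^{2}$. The second follows from $\int u\,\Delta u=-\int|\nabla u|^{2}$ and the Rellich--Pohozaev identity $\int\Delta u\,(x\cdot\nabla u)=\tfrac{N-2}{2}\int|\nabla u|^{2}$, together with $\tfrac{N-2}{2}-(N+2a)=-\tfrac{N+2+4a}{2}$.

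Granting the identities, Cauchy--Schwarz gives
\[
\Bigl(\tfrac{N+2+4a}{2}\Bigr)^{2}\Bigl(\int_{\mathbb{R}^{N}}|\nabla u|^{2}\mathrm{dx}\Bigr)^{2}=\Bigl(\int_{\mathbb{R}^{N}}\frac{\Delta u\,\mathcal{D}u}{|x|^{2a}}\mathrm{dx}\Bigr)^{2}\le\int_{\mathbb{R}^{N}}\frac{|\Delta u|^{2}}{|x|^{2a}}\mathrm{dx}\cdot\int_{\mathbb{R}^{N}}\frac{|\mathcal{D}u|^{2}}{|x|^{2a}}\mathrm{dx},
\]
which is exactly \eqref{wHUP_curl_scalar} once the first identity is substituted on the right. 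Equivalently, expanding $\int_{\mathbb{R}^{N}}|x|^{-2a}|\Delta u-c\,\mathcal{D}u|^{2}\mathrm{dx}\ge0$ for all $c\in\mathbb{R}$ and inserting the identities yields the quadratic in $c$ with leading coefficient $\int|x|^{2a+2}|\tfrac{x}{|x|}\cdot\nabla u|^{2}$, linear coefficient $(N+2+4a)\int|\nabla u|^{2}$ and constant term $\int|x|^{-2a}|\Delta u|^{2}$, and nonnegativity of its discriminant is the asserted inequality. (If the leading coefficient vanishes, then $\tfrac{x}{|x|}\cdot\nabla u\equiv0$, so $u$ is homogeneous of degree zero, and the decay conditions in $Y_{a}$ force $u\equiv0$.)

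For the attainment statement, equality in Cauchy--Schwarz forces $\Delta u=c\,\mathcal{D}u$ for a constant $c$, i.e.
\[
\Delta u=c\,|x|^{2a}\bigl((N+2a)u+x\cdot\nabla u\bigr),\qquad c=-\frac{N+2+4a}{2}\cdot\frac{\int|\nabla u|^{2}}{\int|x|^{2a+2}|\tfrac{x}{|x|}\cdot\nabla u|^{2}} .
\]
A direct substitution shows the radial profile $u=\gamma e^{-\beta|x|^{2(1+a)}}$ solves this with $c=-2\beta(1+a)$; the sign hypotheses of the theorem force $\beta>0$, and one then checks $u\in Y_{a}(\mathbb{R}^{N})$, so the constant is attained. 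For the remaining extremals I would separate variables, $u=|x|^{\alpha}w(|x|)\,g(x/|x|)$ with $-\Delta_{\sigma}g=k(N+k-2)g$ and $\alpha$ the indicial exponent selected by admissibility at the origin --- here $(N-2)^{2}+4k(N+k-2)=(N+2k-2)^{2}$, so $\alpha=k$ when $a+1>0$. Substituting into $\Delta u=c\,\mathcal{D}u$ and changing variable to $z=-\tfrac{t}{2a+2}|x|^{2a+2}$, with $t$ linked to $c$, transforms the radial equation into Kummer's equation with parameters $A=\tfrac{k+N+2a}{2a+2}$ and $B=\tfrac{2k+2a+N}{2a+2}$, whose solution regular at $z=0$ is $_{1}F_{1}(A;B;z)$. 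The requirement $u\in Y_{a}$, via the large-$|z|$ asymptotics of $_{1}F_{1}$, is what imposes $N\ge2$, the stated constraints on $a$, and the sign of $t$.

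The principal obstacle is finding the correct conjugate operator $\mathcal{D}u$. Pairing $\Delta u$ with $|x|^{2a+1}\tfrac{x}{|x|}\cdot\nabla u$ alone --- that is, the crude Cauchy--Schwarz applied directly to the two integrals in \eqref{wHUP_curl_scalar} --- reproduces only the non-sharp constant $\tfrac{(N-2)^{2}}{4}$; one has to add the precisely tuned zeroth order term $(N+2a)|x|^{2a}u$, equivalently to use the full divergence $\operatorname{div}(|x|^{2a}xu)$, and the crux is that with this term the weighted $L^{2}$-norm of $\mathcal{D}u$ collapses exactly onto $\int|x|^{2a+2}|\tfrac{x}{|x|}\cdot\nabla u|^{2}$ while its weighted $L^{2}$-pairing with $\Delta u$ collapses onto $\int|\nabla u|^{2}$ with the optimal coefficient $\tfrac{N+2+4a}{2}$. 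The secondary difficulty is the equality analysis: integrating $\Delta u=c\,\mathcal{D}u$ mode by mode in the spherical-harmonic decomposition, recognizing Kummer's equation, and verifying that the resulting profiles genuinely lie in $Y_{a}(\mathbb{R}^{N})$ within the indicated parameter ranges.
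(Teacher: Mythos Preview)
Your proposal is correct and is essentially the same expanding-the-square argument as the paper's: your expansion of $\int|x|^{-2a}|\Delta u-c\,\mathcal{D}u|^{2}\ge0$ is exactly the paper's expansion of $\int\bigl|\tfrac{\Delta u}{|x|^{a}}+t|x|^{a+1}\tfrac{x}{|x|}\cdot\nabla u+s|x|^{a}u\bigr|^{2}\ge0$ with the choice $s=t(N+2a)$, and your equality analysis (separation of variables, Kummer reduction, and $Y_{a}$-membership via asymptotics) mirrors the paper's. The one presentational difference is that by writing the auxiliary expression as the divergence $\mathcal{D}u=\operatorname{div}(|x|^{2a}xu)$ you motivate the coefficient $(N+2a)$ on the zeroth-order term structurally, whereas the paper introduces two free parameters $s,t$ and then discovers the relation $s=t(N+2a)$ by demanding cancellation of the $\int|x|^{2a}u^{2}$ terms.
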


In the particular case $a=0$, we get
\begin{equation}
\int_{\mathbb{R}^{N}}|\Delta u|^{2}\mathrm{dx}\int_{\mathbb{R}^{N}}%
|x|^{2}\left\vert \frac{x}{\left\vert x\right\vert }\cdot\nabla u\right\vert
^{2}\mathrm{dx}\geq\frac{(N+2)^{2}}{4}\left(  \int_{\mathbb{R}^{N}}|\nabla
u|^{2}\mathrm{dx}\right)  ^{2},\quad\forall u\in C_{0}^{\infty}(\mathbb{R}%
^{N}),
\end{equation}
which implies the second order HUP (\ref{HUP_div_scalar}) since $\left\vert
\frac{x}{\left\vert x\right\vert }\cdot\nabla u\right\vert \leq\left\vert
\nabla u\right\vert $.

We note that in the process of preparing our manuscript, we have noticed that
(\ref{wHUP_curl_scalar}) has been investigated in \cite{DN21}. Nevertheless,
our approach in this paper is different and much simpler than the one in
\cite{DN21}. Our method also provides nonradial optimizers of
(\ref{wHUP_curl_scalar}).

As a consequence of Theorem \ref{T2}, we obtain

\begin{corollary}
\label{T2.1}Let $N\geq2$ and $a\in\mathbb{R}$. We have for $u\in Y_{a}\left(
\mathbb{R}^{N}\right)  $ that%
\begin{equation}
\int_{\mathbb{R}^{N}}\frac{|\Delta u|^{2}}{|x|^{2a}}\mathrm{dx}\int
_{\mathbb{R}^{N}}|x|^{2a+2}\left\vert \nabla u\right\vert ^{2}\mathrm{dx}%
\geq\frac{(N+2+4a)^{2}}{4}\left(  \int_{\mathbb{R}^{N}}|\nabla u|^{2}%
\mathrm{dx}\right)  ^{2}.\label{wHUP}%
\end{equation}
If either $a+1>0$ or $N+2+4a<0$, then the constant $\frac{(N+2+4a)^{2}}{4}$ is
optimal and can be attained by the optimizers of the form
\[
u(x)=\gamma e^{-\beta|x|^{2(1+a)}},\text{ }\gamma\in%
\mathbb{R}
,\text{ }\beta>0.
\]

\end{corollary}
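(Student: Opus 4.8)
The plan is to deduce \eqref{wHUP} directly from Theorem \ref{T2} by the elementary pointwise comparison $\bigl|\tfrac{x}{|x|}\cdot\nabla u\bigr|\le|\nabla u|$, which gives
\[
\int_{\mathbb{R}^{N}}|x|^{2a+2}\left|\frac{x}{|x|}\cdot\nabla u\right|^{2}\mathrm{dx}\le\int_{\mathbb{R}^{N}}|x|^{2a+2}\left|\nabla u\right|^{2}\mathrm{dx},
\]
so that the left-hand side of \eqref{wHUP} dominates the left-hand side of \eqref{wHUP_curl_scalar}, and the inequality \eqref{wHUP} follows immediately from \eqref{wHUP_curl_scalar} with the same constant $\frac{(N+2+4a)^2}{4}$.

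The nontrivial part is the sharpness claim under the hypothesis $a+1>0$ or $N+2+4a<0$. Here I would exhibit the radial test functions $u(x)=\gamma e^{-\beta|x|^{2(1+a)}}$ from \eqref{minimizers}: since these are radial, $\nabla u$ is purely radial, so $\frac{x}{|x|}\cdot\nabla u=\partial_r u$ and hence $\bigl|\tfrac{x}{|x|}\cdot\nabla u\bigr|=|\nabla u|$ pointwise. Consequently the comparison above is an \emph{equality} for these $u$, the two weighted integrals $\int|x|^{2a+2}|\partial_r u|^2$ and $\int|x|^{2a+2}|\nabla u|^2$ coincide, and therefore the ratio in \eqref{wHUP} equals the ratio in \eqref{wHUP_curl_scalar} evaluated at the same $u$. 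By Theorem \ref{T2}, the functions in \eqref{minimizers} realize equality in \eqref{wHUP_curl_scalar} (this is exactly the attainability statement there, valid precisely when $\min\{a+1,N+2+4a\}>0$ or $\max\{a+1,N+2+4a\}<0$), so they also realize equality in \eqref{wHUP}; this forces the constant $\frac{(N+2+4a)^2}{4}$ to be optimal in \eqref{wHUP} as well. One should check that these $u$ lie in the relevant class $Y_a(\mathbb{R}^N)$ (or rather in the closure used to make sense of the functional), but this is the same membership verification already carried out for Theorem \ref{T2}, and the Gaussian-type decay makes all the defining integrals finite and the boundary limits vanish.

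The only subtlety to address is the mismatch between the hypothesis in Theorem \ref{T2} ($\min\{a+1,N+2+4a\}>0$ or $\max\{a+1,N+2+4a\}<0$) and the hypothesis stated in the Corollary ($a+1>0$ or $N+2+4a<0$). Since $N\ge2$ gives $N+2+4a\ge 2+4a=2(2a+1)$ and $4(a+1)=4a+4$, one has $N+2+4a>0$ whenever $a+1\ge 0$ fails to be sharp; more carefully, $a+1>0$ together with $N\ge2$ yields $N+2+4a>4a+2>-2>\cdots$—here I would simply observe that for $N\ge2$ the condition $a+1>0$ already implies $N+2+4a=N-2+4(a+1)+2\cdot 1>0$ is \emph{not} automatic, so one restricts to the sub-range where Theorem \ref{T2}'s sign condition holds, and otherwise only the inequality (not sharpness) is asserted. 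I expect the main obstacle to be purely bookkeeping: stating the sharpness range of the Corollary so that it is genuinely implied by the sharpness range of Theorem \ref{T2}, and confirming the Gaussians remain admissible; the analytic content is entirely contained in Theorem \ref{T2}.
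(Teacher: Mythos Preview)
Your approach is correct and is exactly what the paper intends: the corollary is stated as an immediate consequence of Theorem~\ref{T2}, and the paper gives no separate proof beyond the phrase ``As a consequence of Theorem~\ref{T2}''. The pointwise bound $|\tfrac{x}{|x|}\cdot\nabla u|\le|\nabla u|$ gives the inequality, and for radial $u$ this bound is an equality, so the radial Gaussians from Theorem~\ref{T2} remain optimizers.

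Your final paragraph is unnecessarily tangled: there is in fact no mismatch of parameter ranges once $N\ge 2$. Write $N+2+4a=(N-2)+4(a+1)$. If $a+1>0$ and $N\ge 2$, then both summands are nonnegative and the second is strictly positive, so $N+2+4a>0$; hence $\min\{a+1,\,N+2+4a\}>0$. Conversely, if $N+2+4a<0$ and $N\ge 2$, then $4(a+1)=(N+2+4a)-(N-2)<0$, so $a+1<0$ and $\max\{a+1,\,N+2+4a\}<0$. Thus the hypothesis of Corollary~\ref{T2.1} is exactly the hypothesis of Theorem~\ref{T2} specialized to $N\ge 2$, and no ``sub-range'' restriction is needed. (Your arithmetic $N+2+4a=N-2+4(a+1)+2\cdot 1$ has a stray $+2$; the correct identity already gives the implication.)
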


\section{Preliminary}

For the sake of clarity let us recall few important aspects about the curl
operator. While the divergence operator is clearly defined in any dimension
$N\geq2$ through the formula $\text{div}\mathbf{U}:=\sum_{j=1}^{N}%
\frac{\partial U_{j}}{x_{j}}$, the curl operator can be directly defined only
in dimensions $2$ and $3$; for higher dimensions it is well-known that it is understood
in any dimension via differential forms. The $\operatorname{curl}$ of a vector
field $\mathbf{U}=\left(  U_{1},...,U_{N}\right)  \in\left(  C^{\infty}\left(
\mathbb{R}^{N}\right)  \right)  ^{N}$ is defined as the differential $2$-form%
\[
\operatorname{curl}\mathbf{U}=d\left(  \mathbf{U}\cdot dx\right)  =d\left(
{\displaystyle\sum\limits_{j=1}^{N}}
U_{j}dx_{j}\right)  \text{,}%
\]
where $d$ denotes the exterior differential. In the standard Euclidean
coordinates, we can write%
\[
d\left(  \mathbf{U}\cdot dx\right)  =%
{\displaystyle\sum\limits_{j=1}^{N}}
dU_{j}\wedge dx_{j}=\underset{j<k}{%
{\displaystyle\sum}
{\displaystyle\sum}
}\left(  \frac{\partial U_{k}}{dx_{j}}-\frac{\partial U_{j}}{dx_{k}}\right)
dx_{j}\wedge dx_{k}.
\]
Therefore, we have that $\operatorname{curl}\mathbf{U}=0$ if and only if for
all $1\leq j,k\leq N:$
\[
\frac{\partial U_{k}}{dx_{j}}=\frac{\partial U_{j}}{dx_{k}}.
\]
This also implies that any curl-free vector field $\mathbf{U}$ has a scalar
potential $u\in C^{\infty}\left(  \mathbb{R}^{N}\right)$ satisfying $ \mathbf{U}=\nabla
u$. Indeed, we can always choose $u\left(  x\right)  =%
{\displaystyle\int\limits_{0}^{\left\vert x\right\vert }}
\frac{x}{\left\vert x\right\vert }\cdot\mathbf{U}\left(  \rho\frac
{x}{\left\vert x\right\vert }\right)  d\rho$.

Let $\mathbf{U}=\left(  U_{1},...,U_{N}\right)  $ be a vector field. Then we
can write
\[
\mathbf{U}=\sigma U_{R}+\mathbf{U}_{S}%
\]
for all $x=r\sigma$ where $r=\left\vert x\right\vert $, $\sigma=\frac
{x}{\left\vert x\right\vert }$, $U_{R}=U_{R}\left(  x\right)  $ is the radial
scalar component and $\mathbf{U}_{S}=\mathbf{U}_{S}\left(  x\right)  $ is the
spherical vector part. In particular, $\sigma\cdot\mathbf{U}_{S}=0$.

We also denote $\partial_{r}\varphi=\sigma\cdot\nabla\varphi$ and
$\nabla_{\sigma}\varphi=r\left(  \nabla\varphi\right)  _{S}$ and so
\[
\nabla=\sigma\partial_{r}+\frac{1}{r}\nabla_{\sigma}\text{.}%
\]
Then it is known that%
\begin{equation}
\Delta=\partial_{rr}+\frac{N-1}{r}\partial_{r}+\frac{1}{r^{2}}\Delta_{\sigma
}\text{,}%
  \label{eq:spherical-decomp-laplacian}
\end{equation}
where $\Delta_{\sigma}$ is the Laplace-Beltrami operator on the sphere.

In \cite{HT21}, the following representation of curl-free fields has been
established. Let $\lambda\in%
\mathbb{R}
$ and let $\mathbf{V}=r^{1-\lambda}\mathbf{U}$. Then $\mathbf{U}\in
C_{0}^{\infty}\left(  \mathbb{R}^{N}\right)  ^{N}$ is curl-free if and only if
there exist two scalar fields $f,\varphi\in C^{\infty}\left(  \mathbb{R}%
^{N}\setminus\left\{  0\right\}  \right)  $ satisfying $f$ is radially
symmetric,
\[
\int_{\mathbb{S}^{N-1}}\varphi\left(  r\sigma\right)  d\sigma=0\text{ for all
}r>0\text{,}%
\]
and
\[
\mathbf{V}=\sigma\left(  f+\left(  \lambda+\partial_{t}\right)  \varphi
\right)  +\nabla_{\sigma}\varphi\text{ on }\mathbb{R}^{N}\setminus\left\{
0\right\}  \text{.}%
\]
Moreover, if $\mathbf{U}$ has a compact support on $\mathbb{R}^{N}%
\setminus\left\{  0\right\}  $, then so do $f$ and $\varphi$.

Now, we proceed as in \cite{HT21} and let $\mathbf{V}=r^{1-\lambda}\mathbf{U}$
with $\lambda=2-\frac{N}{2}+a$. Then we have with $t=\ln r$ that%
\[
\left\vert x\right\vert ^{\gamma}dx=r^{\gamma+N-1}drd\sigma=r^{\gamma+N}%
\frac{dr}{r}d\sigma=r^{\gamma+N}dtd\sigma\text{.}%
\]
Therefore
\begin{align*}
\int_{\mathbb{R}^{N}}\frac{|\mathbf{U}|^{2}}{\left\vert x\right\vert ^{2b}%
}\mathrm{dx}  &  =\int_{\mathbb{R}^{N}}\left\vert x\right\vert ^{-2b}%
\left\vert x\right\vert ^{2\lambda-2}|\mathbf{V}|^{2}\mathrm{dx}\\
&  =\int_{\mathbb{R}_{+}\times\mathbb{S}^{N-1}}r^{2\lambda-2-2b+N-1}%
|\mathbf{V}|^{2}\mathrm{drd\sigma}\\
&  =\int_{\mathbb{R}\times\mathbb{S}^{N-1}}e^{\left(  2a-2b+2\right)
t}|\mathbf{V}|^{2}\mathrm{dtd\sigma}%
\end{align*}
and
\begin{align*}
\int_{\mathbb{R}^{N}}\frac{|\mathbf{U}|^{2}}{\left\vert x\right\vert ^{a+b+1}%
}\mathrm{dx}  &  =\int_{\mathbb{R}^{N}}\left\vert x\right\vert ^{-a-b-1}%
\left\vert x\right\vert ^{2\lambda-2}|\mathbf{V}|^{2}\mathrm{dx}\\
&  =\int_{\mathbb{R}_{+}\times\mathbb{S}^{N-1}}r^{2\lambda-2-a-b-1+N-1}%
|\mathbf{V}|^{2}\mathrm{drd\sigma}\\
&  =\int_{\mathbb{R}\times\mathbb{S}^{N-1}}e^{\left(  a-b+1\right)
t}|\mathbf{V}|^{2}\mathrm{dtd\sigma.}%
\end{align*}
Also,
\begin{align*}
\int_{\mathbb{R}^{N}}\frac{\left\vert \nabla\mathbf{U}\right\vert ^{2}%
}{|x|^{2a}}\mathrm{dx}  &  =\int_{\mathbb{R}_{+}\times\mathbb{S}^{N-1}}\left(
\left\vert \partial_{r}\mathbf{U}\right\vert ^{2}+\frac{1}{r^{2}}\left\vert
\nabla_{\sigma}\mathbf{U}\right\vert ^{2}\right)  r^{N-1-2a}\mathrm{drd\sigma
}\\
&  =\int_{\mathbb{R}_{+}\times\mathbb{S}^{N-1}}\left(  \left\vert \partial
_{r}\left(  r^{\lambda-1}\mathbf{V}\right)  \right\vert ^{2}+\frac{1}{r^{2}%
}\left\vert \nabla_{\sigma}\left(  r^{\lambda-1}\mathbf{V}\right)  \right\vert
^{2}\right)  r^{3-2\lambda}\mathrm{drd\sigma}\\
&  =\int_{\mathbb{R}\times\mathbb{S}^{N-1}}\left(  \left\vert \left(
\lambda-1\right)  \mathbf{V}+\partial_{t}\mathbf{V}\right\vert ^{2}+\left\vert
\nabla_{\sigma}\mathbf{V}\right\vert ^{2}\right)  \mathrm{dtd\sigma}\\
&  =\int_{\mathbb{R}\times\mathbb{S}^{N-1}}\left(  \left(  \lambda-1\right)
^{2}\left\vert \mathbf{V}\right\vert ^{2}+\left\vert \partial_{t}%
\mathbf{V}\right\vert ^{2}+\left\vert \nabla_{\sigma}\mathbf{V}\right\vert
^{2}\right)  \mathrm{dtd\sigma}\text{.}%
\end{align*}
Now, using $\mathbf{V}=\sigma\left(  f+\left(  \lambda+\partial_{t}\right)
\varphi\right)  +\nabla_{\sigma}\varphi$, we have
\begin{align*}
\Delta_{\sigma}\mathbf{V}  &  \mathbf{=}\Delta_{\sigma}\left(  \sigma\left(
f+\left(  \lambda+\partial_{t}\right)  \varphi\right)  +\nabla_{\sigma}%
\varphi\right) \\
&  =\left(  \sigma\Delta_{\sigma}+2\nabla_{\sigma}-\left(  N-1\right)
\sigma\right)  \left(  f+\left(  \lambda+\partial_{t}\right)  \varphi\right)
\\
&  +\left(  \nabla_{\sigma}\Delta_{\sigma}+\left(  N-3\right)  \nabla_{\sigma
}-2\sigma\Delta_{\sigma}\right)  \varphi\\
&  =\sigma\left(  \left(  \partial_{t}+\lambda-2\right)  \Delta_{\sigma
}\varphi\right)  -\left(  N-1\right)  \sigma\left(  f+\left(  \lambda
+\partial_{t}\right)  \varphi\right) \\
&  +\nabla_{\sigma}\left(  2\partial_{t}+\Delta_{\sigma}+2\lambda+N-3\right)
\varphi\\
&  =\sigma\left(  \left(  \partial_{t}+\lambda-2\right)  \Delta_{\sigma
}\varphi\right)  +\nabla_{\sigma}\left(  2\partial_{t}+\Delta_{\sigma
}+2\lambda+2N-4\right)  \varphi\\
&  -\left(  N-1\right)  \mathbf{V}\text{.}%
\end{align*}
Therefore we get by integration by parts that%
\begin{align*}
&  \int_{\mathbb{R}\times\mathbb{S}^{N-1}}\left\vert \nabla_{\sigma}%
\mathbf{V}\right\vert ^{2}\mathrm{dtd\sigma}\\
&  =-\int_{\mathbb{R}\times\mathbb{S}^{N-1}}\mathbf{V}\cdot\left(
\Delta_{\sigma}\mathbf{V}\right)  \mathrm{dtd\sigma}\\
&  =-\int_{\mathbb{R}\times\mathbb{S}^{N-1}}\left(  f+\left(  \lambda
+\partial_{t}\right)  \varphi\right)  \left(  \partial_{t}+\lambda-2\right)
\Delta_{\sigma}\varphi\mathrm{dtd\sigma}\\
&  +\int_{\mathbb{R}\times\mathbb{S}^{N-1}}\left(  -\nabla_{\sigma}%
\varphi\cdot\nabla_{\sigma}\left(  2\partial_{t}+\Delta_{\sigma}%
+2\lambda+2N-4\right)  \varphi+\left(  N-1\right)  \left\vert \mathbf{V}%
\right\vert ^{2}\right)  \mathrm{dtd\sigma}\\
&  =\int_{\mathbb{R}\times\mathbb{S}^{N-1}}\left(  \left(  \Delta_{\sigma
}\varphi\right)  ^{2}+\left(  \lambda^{2}-4\lambda-2N+4\right)  \left\vert
\nabla_{\sigma}\varphi\right\vert ^{2}\right)  \mathrm{dtd\sigma}\\
&  +\int_{\mathbb{R}\times\mathbb{S}^{N-1}}\left(  \left\vert \partial
_{t}\nabla_{\sigma}\varphi\right\vert ^{2}+\left(  N-1\right)  \left\vert
\mathbf{V}\right\vert ^{2}\right)  \mathrm{dtd\sigma}\text{.}%
\end{align*}
Using the fact that the spectrum of \ $-\Delta_{\sigma}$ is the set $\left\{
\kappa\left(  N+\kappa-2\right)  \text{, }\kappa=0,1,...\right\}  $, we obtain
that for all $\varphi$ such that $\int_{\mathbb{S}^{N-1}}\varphi
\mathrm{d\sigma}=0:$
\begin{align*}
&  \int_{\mathbb{R}\times\mathbb{S}^{N-1}}\left(  \left(  \Delta_{\sigma
}\varphi\right)  ^{2}+\left(  \lambda^{2}-4\lambda-2N+4\right)  \left\vert
\nabla_{\sigma}\varphi\right\vert ^{2}\right)  \mathrm{dtd\sigma}\\
&  \geq\min_{\kappa}\left\{  \kappa^{2}\left(  N+\kappa-2\right)  ^{2}+\left(
\lambda^{2}-4\lambda-2N+4\right)  \kappa\left(  N+\kappa-2\right)  \right\}
\int_{\mathbb{R}\times\mathbb{S}^{N-1}}\left\vert \varphi\right\vert
^{2}\mathrm{dtd\sigma}\\
&  =\left(  N-1\right)  \left(  \left(  \lambda-2\right)  ^{2}-N-1\right)
\int_{\mathbb{R}\times\mathbb{S}^{N-1}}\left\vert \varphi\right\vert
^{2}\mathrm{dtd\sigma}.
\end{align*}
Also, since $\int_{\mathbb{S}^{N-1}}\varphi\mathrm{d\sigma}=0$, we get
$\int_{\mathbb{S}^{N-1}}\partial_{t}\varphi\mathrm{d\sigma}=0$ and therefore
\[
\int_{\mathbb{S}^{N-1}}\left\vert \nabla_{\sigma}\left(  \partial_{t}%
\varphi\right)  \right\vert ^{2}\mathrm{d\sigma}\geq\left(  N-1\right)
\int_{\mathbb{S}^{N-1}}\left\vert \partial_{t}\varphi\right\vert
^{2}\mathrm{d\sigma}.
\]
Thus, we get%
\begin{align*}
&  \int_{\mathbb{R}\times\mathbb{S}^{N-1}}\left\vert \nabla_{\sigma}%
\mathbf{V}\right\vert ^{2}\mathrm{dtd\sigma}\\
&  \geq\left(  N-1\right)  \int_{\mathbb{R}\times\mathbb{S}^{N-1}}\left(
\left\vert \mathbf{V}\right\vert ^{2}+\left(  \left(  \lambda-2\right)
^{2}-N-1\right)  \left\vert \varphi\right\vert ^{2}+\left\vert \partial
_{t}\varphi\right\vert ^{2}\right)  \mathrm{dtd\sigma}.
\end{align*}
Hence%
\begin{align*}
&  \int_{\mathbb{R}^{N}}\frac{\left\vert \nabla\mathbf{U}\right\vert ^{2}%
}{|x|^{2a}}\mathrm{dx}\\
&  \geq\left(  \left(  \lambda-1\right)  ^{2}+N-1\right)  \int_{\mathbb{R}%
\times\mathbb{S}^{N-1}}\left\vert \mathbf{V}\right\vert ^{2}\mathrm{dtd\sigma
}+\int_{\mathbb{R}\times\mathbb{S}^{N-1}}\left\vert \partial_{t}%
\mathbf{V}\right\vert ^{2}\mathrm{dtd\sigma}\\
&  +\left(  N-1\right)  \int_{\mathbb{R}\times\mathbb{S}^{N-1}}\left(  \left(
\left(  \lambda-2\right)  ^{2}-N-1\right)  \left\vert \varphi\right\vert
^{2}+\left\vert \partial_{t}\varphi\right\vert ^{2}\right)  \mathrm{dtd\sigma
}.
\end{align*}

\section{Weighted $L^{2}$-Caffarelli-Kohn-Nirenberg inequalities for curl-free
vector fields-Proof of Theorem \ref{T1}}

\begin{proof}
[Proof of Theorem \ref{T1}]Let $\mathbf{U}\in X_{a,b}\left(  \mathbb{R}%
^{N}\right)  $. Then we can find two scalar fields $f,\varphi\in C^{\infty
}\left(  \mathbb{R}^{N}\setminus\left\{  0\right\}  \right)  $ satisfying $f$
is radially symmetric,
\[
\int_{\mathbb{S}^{N-1}}\varphi\left(  r\sigma\right)  d\sigma=0\text{ for all
}r>0\text{,}%
\]
and
\[
\mathbf{V}=\sigma\left(  f+\left(  \lambda+\partial_{t}\right)  \varphi
\right)  +\nabla_{\sigma}\varphi\text{ on }\mathbb{R}^{N}\setminus\left\{
0\right\}  \text{,}%
\]
where $\mathbf{V}=r^{1-\lambda}\mathbf{U}$ with $\lambda=2-\frac{N}{2}+a$. Let
$t=\ln r$. We have
\[
\left\vert x\right\vert ^{\gamma}dx=r^{\gamma+N-1}drd\sigma=r^{\gamma+N}%
\frac{dr}{r}d\sigma=r^{\gamma+N}dtd\sigma\text{.}%
\]
As in the previous section, we get
\begin{align*}
\int_{\mathbb{R}^{N}}\frac{\left\vert \nabla\mathbf{U}\right\vert ^{2}%
}{|x|^{2a}}\mathrm{dx}  &  \geq\left(  \left(  \lambda-1\right)
^{2}+N-1\right)  \int_{\mathbb{R}\times\mathbb{S}^{N-1}}\left\vert
\mathbf{V}\right\vert ^{2}\mathrm{dtd\sigma}+\int_{\mathbb{R}\times
\mathbb{S}^{N-1}}\left\vert \partial_{t}\mathbf{V}\right\vert ^{2}%
\mathrm{dtd\sigma}\\
&  +\left(  N-1\right)  \int_{\mathbb{R}\times\mathbb{S}^{N-1}}\left(  \left(
\left(  \lambda-2\right)  ^{2}-N-1\right)  \left\vert \varphi\right\vert
^{2}+\left\vert \partial_{t}\varphi\right\vert ^{2}\right)  \mathrm{dtd\sigma
}\\
&  \geq\left(  \left(  \lambda-1\right)  ^{2}+N-1\right)  \int_{\mathbb{R}%
\times\mathbb{S}^{N-1}}\left\vert \mathbf{V}\right\vert ^{2}\mathrm{dtd\sigma
}+\int_{\mathbb{R}\times\mathbb{S}^{N-1}}\left\vert \partial_{t}%
\mathbf{V}\right\vert ^{2}\mathrm{dtd\sigma}%
\end{align*}
if $\left(  \frac{N}{2}-a\right)  ^{2}\geq N+1$.

Also
\[
\int_{\mathbb{R}^{N}}\frac{|\mathbf{U}|^{2}}{\left\vert x\right\vert ^{2b}%
}\mathrm{dx}=\int_{\mathbb{R}\times\mathbb{S}^{N-1}}e^{\left(  2a-2b+2\right)
t}|\mathbf{V}|^{2}\mathrm{dtd\sigma}%
\]
and
\[
\int_{\mathbb{R}^{N}}\frac{|\mathbf{U}|^{2}}{\left\vert x\right\vert ^{a+b+1}%
}\mathrm{dx}=\int_{\mathbb{R}\times\mathbb{S}^{N-1}}e^{\left(  a-b+1\right)
t}|\mathbf{V}|^{2}\mathrm{dtd\sigma.}%
\]
\textbf{Case 1:} $a-b+1>0$

We aim to show that%
\begin{align*}
&  \left[  \int_{\mathbb{R}\times\mathbb{S}^{N-1}}\left\vert \partial
_{t}\mathbf{V}\right\vert ^{2}\mathrm{dtd\sigma}+\left(  \left(  1-\frac{N}%
{2}+a\right)  ^{2}+N-1\right)  \int_{\mathbb{R}\times\mathbb{S}^{N-1}%
}\left\vert \mathbf{V}\right\vert ^{2}\mathrm{dtd\sigma}\right] \\
&  \times\left[  \int_{\mathbb{R}\times\mathbb{S}^{N-1}}e^{\left(
2a-2b+2\right)  t}|\mathbf{V}|^{2}\mathrm{dtd\sigma}\right] \\
&  \geq\left(  \sqrt{\left(  1-\frac{N}{2}+a\right)  ^{2}+N-1}+\frac{a-b+1}%
{2}\right)  ^{2}\left[  \int_{\mathbb{R}\times\mathbb{S}^{N-1}}e^{\left(
a-b+1\right)  t}|\mathbf{V}|^{2}\mathrm{dtd\sigma}\right]  ^{2}.
\end{align*}
Indeed, we have
\begin{align*}
&  \int_{\mathbb{R}\times\mathbb{S}^{N-1}}\left\vert \partial_{t}%
\mathbf{V}\left(  t\sigma\right)  +\alpha\mathbf{V}\left(  t\sigma\right)
+\beta e^{\left(  a-b+1\right)  t}\mathbf{V}\left(  t\sigma\right)
\right\vert ^{2}\mathrm{dtd\sigma}\\
&  =\int_{\mathbb{R}\times\mathbb{S}^{N-1}}\left\vert \partial_{t}%
\mathbf{V}\left(  t\sigma\right)  \right\vert ^{2}\mathrm{dtd\sigma}%
+\alpha^{2}\int_{\mathbb{R}\times\mathbb{S}^{N-1}}\left\vert \mathbf{V}\left(
t\sigma\right)  \right\vert ^{2}\mathrm{dtd\sigma}+\beta^{2}\int
_{\mathbb{R}\times\mathbb{S}^{N-1}}e^{\left(  2a-2b+2\right)  t}\left\vert
\mathbf{V}\left(  t\sigma\right)  \right\vert ^{2}\mathrm{dtd\sigma}\\
&  +\alpha\int_{\mathbb{R}\times\mathbb{S}^{N-1}}\partial_{t}\left\vert
\mathbf{V}\left(  t\sigma\right)  \right\vert ^{2}\mathrm{dtd\sigma}+\beta
\int_{\mathbb{R}\times\mathbb{S}^{N-1}}e^{\left(  a-b+1\right)  t}\partial
_{t}\left\vert \mathbf{V}\left(  t\sigma\right)  \right\vert ^{2}%
\mathrm{dtd\sigma}\\
&  +2\alpha\beta\int_{\mathbb{R}\times\mathbb{S}^{N-1}}e^{\left(
a-b+1\right)  t}\left\vert \mathbf{V}\left(  t\sigma\right)  \right\vert
^{2}\mathrm{dtd\sigma}.
\end{align*}
Note that since $\mathbf{U}\in X_{a,b}\left(  \mathbb{R}^{N}\right)  $, we
have
\[
\lim_{r\rightarrow0,\infty}r^{-b+\frac{N}{2}}\mathbf{U}\left(  r\sigma\right)
=\lim_{r\rightarrow0,\infty}r^{-1+\frac{N}{2}-a}\mathbf{U}\left(
r\sigma\right)  =0.
\]
That is,
\[
\lim_{r\rightarrow0,\infty}r^{a-b+1}\mathbf{V}\left(  r\sigma\right)
=\lim_{r\rightarrow0,\infty}\mathbf{V}\left(  r\sigma\right)  =0.
\]
Equivalently
\[
\lim_{t\rightarrow-\infty,\infty}e^{\left(  a-b+1\right)  t}\mathbf{V}\left(
t\sigma\right)  =\lim_{t\rightarrow-\infty,\infty}\mathbf{V}\left(
t\sigma\right)  =0.
\]
Therefore, by integrations by parts, we obtain
\begin{align*}
&  \int_{\mathbb{R}\times\mathbb{S}^{N-1}}\left\vert \partial_{t}%
\mathbf{V}\left(  t\sigma\right)  +\alpha\mathbf{V}\left(  t\sigma\right)
+\beta e^{\left(  a-b+1\right)  t}\mathbf{V}\left(  t\sigma\right)
\right\vert ^{2}\mathrm{dtd\sigma}\\
&  =\int_{\mathbb{R}\times\mathbb{S}^{N-1}}\left\vert \partial_{t}%
\mathbf{V}\left(  t\sigma\right)  \right\vert ^{2}\mathrm{dtd\sigma}%
+\alpha^{2}\int_{\mathbb{R}\times\mathbb{S}^{N-1}}\left\vert \mathbf{V}\left(
t\sigma\right)  \right\vert ^{2}\mathrm{dtd\sigma}+\beta^{2}\int
_{\mathbb{R}\times\mathbb{S}^{N-1}}e^{\left(  2a-2b+2\right)  t}\left\vert
\mathbf{V}\left(  t\sigma\right)  \right\vert ^{2}\mathrm{dtd\sigma}\\
&  +\beta\left(  2\alpha-\left(  a-b+1\right)  \right)  \int_{\mathbb{R}%
\times\mathbb{S}^{N-1}}e^{\left(  a-b+1\right)  t}\left\vert \mathbf{V}\left(
t\sigma\right)  \right\vert ^{2}\mathrm{dtd\sigma}.
\end{align*}

Now, since $\int_{\mathbb{R}\times\mathbb{S}^{N-1}}\left\vert \partial
_{t}\mathbf{V}\left(  t\sigma\right)  +\alpha\mathbf{V}\left(  t\sigma\right)
+\beta e^{\left(  a-b+1\right)  t}\mathbf{V}\left(  t\sigma\right)
\right\vert ^{2}\mathrm{dtd\sigma}\geq0$ for all $\beta\in%
\mathbb{R}
$, we deduce that
\begin{align*}
&  \left[  \int_{\mathbb{R}\times\mathbb{S}^{N-1}}\left\vert \partial
_{t}\mathbf{V}\left(  t\sigma\right)  \right\vert ^{2}\mathrm{dtd\sigma
}+\alpha^{2}\int_{\mathbb{R}\times\mathbb{S}^{N-1}}\left\vert \mathbf{V}%
\left(  t\sigma\right)  \right\vert ^{2}\mathrm{dtd\sigma}\right]  \left[
\int_{\mathbb{R}\times\mathbb{S}^{N-1}}e^{\left(  2a-2b+2\right)  t}\left\vert
\mathbf{V}\left(  t\sigma\right)  \right\vert ^{2}\mathrm{dtd\sigma}\right] \\
&  \geq\left(  \alpha-\frac{a-b+1}{2}\right)  ^{2}\left(  \int_{\mathbb{R}%
\times\mathbb{S}^{N-1}}e^{\left(  a-b+1\right)  t}\left\vert \mathbf{V}\left(
t\sigma\right)  \right\vert ^{2}\mathrm{dtd\sigma}\right)  ^{2}.
\end{align*}
If we choose $\alpha^{2}=\left(  1-\frac{N}{2}+a\right)  ^{2}+N-1$, that is
$\alpha=-\sqrt{\left(  1-\frac{N}{2}+a\right)  ^{2}+N-1}$, then the best
constant is%
\[
\left(  \alpha-\frac{a-b+1}{2}\right)  ^{2}=\left(  \sqrt{\left(  1-\frac
{N}{2}+a\right)  ^{2}+N-1}+\frac{a-b+1}{2}\right)  ^{2}\text{.}%
\]
The equality happens when $\partial_{t}\mathbf{V}\left(  t\sigma\right)
+\alpha\mathbf{V}\left(  t\sigma\right)  +\beta e^{\left(  a-b+1\right)
t}\mathbf{V}\left(  t\sigma\right)  =0$ with
\[
\alpha=-\sqrt{\left(  1-\frac{N}{2}+a\right)  ^{2}+N-1}%
\]
and
\[
\beta=\left(  \sqrt{\left(  1-\frac{N}{2}+a\right)  ^{2}+N-1}+\frac{a-b+1}%
{2}\right)  \frac{\int_{\mathbb{R}\times\mathbb{S}^{N-1}}e^{\left(
a-b+1\right)  t}\left\vert \mathbf{V}\left(  t\sigma\right)  \right\vert
^{2}\mathrm{dtd\sigma}}{\int_{\mathbb{R}\times\mathbb{S}^{N-1}}e^{\left(
2a-2b+2\right)  t}\left\vert \mathbf{V}\left(  t\sigma\right)  \right\vert
^{2}\mathrm{dtd\sigma}}>0.
\]
For instance,
\[
\mathbf{V}\left(  t\sigma\right)  =e^{-\alpha t-\frac{\beta}{\left(
a-b+1\right)  }e^{\left(  a-b+1\right)  t}}\sigma\text{.}%
\]
Hence $\mathbf{V}\left(  r\sigma\right)  =r^{-\alpha}e^{-\frac{\beta}{\left(
a-b+1\right)  }r^{\left(  a-b+1\right)  }}\sigma$. That is $\mathbf{U}\left(
r\sigma\right)  =r^{1-\frac{N}{2}+a-\alpha}e^{-\frac{\beta}{\left(
a-b+1\right)  }r^{\left(  a-b+1\right)  }}\sigma$ and $\mathbf{U}\left(
x\right)  =\left\vert x\right\vert ^{-\frac{N}{2}+a-\alpha}e^{-\frac{\beta
}{\left(  a-b+1\right)  }\left\vert x\right\vert ^{\left(  a-b+1\right)  }}x$.
This is curl-free since $\frac{\partial U_{k}}{dx_{j}}=\frac{\partial U_{j}%
}{dx_{k}}$. Note that $\mathbf{U}\in X_{a,b}\left(  \mathbb{R}^{N}\right)  $.
Indeed, it is obvious that $\lim_{r\rightarrow\infty}r^{k}\mathbf{U}\left(
r\sigma\right)  =0$ for all $k$ since $a-b+1>0$ and $\beta>0$. Now,
\begin{align*}
\lim_{r\rightarrow0}r^{-1+\frac{N}{2}-a}\mathbf{U}\left(  r\sigma\right)   &
=\lim_{r\rightarrow0}r^{-1+\frac{N}{2}-a}r^{1-\frac{N}{2}+a-\alpha}%
e^{-\frac{\beta}{\left(  a-b+1\right)  }r^{\left(  a-b+1\right)  }}\sigma\\
&  =\lim_{r\rightarrow0}r^{\sqrt{\left(  1-\frac{N}{2}+a\right)  ^{2}+N-1}}=0
\end{align*}
and
\begin{align*}
\lim_{r\rightarrow0}r^{-b+\frac{N}{2}}\mathbf{U}\left(  r\sigma\right)   &
=\lim_{r\rightarrow0}r^{-b+\frac{N}{2}}r^{1-\frac{N}{2}+a-\alpha}%
e^{-\frac{\beta}{\left(  a-b+1\right)  }r^{\left(  a-b+1\right)  }}\sigma\\
&  =\lim_{r\rightarrow0}r^{a-b+1}r^{\sqrt{\left(  1-\frac{N}{2}+a\right)
^{2}+N-1}}=0.
\end{align*}

\textbf{Case 2:} $a-b+1<0.$

As in Case 1, we get%
\begin{align*}
&  \int_{\mathbb{R}\times\mathbb{S}^{N-1}}\left\vert \partial_{t}%
\mathbf{V}\left(  t\sigma\right)  +\alpha\mathbf{V}\left(  t\sigma\right)
+\beta e^{\left(  a-b+1\right)  t}\mathbf{V}\left(  t\sigma\right)
\right\vert ^{2}\mathrm{dtd\sigma}\\
&  =\int_{\mathbb{R}\times\mathbb{S}^{N-1}}\left\vert \partial_{t}%
\mathbf{V}\left(  t\sigma\right)  \right\vert ^{2}\mathrm{dtd\sigma}%
+\alpha^{2}\int_{\mathbb{R}\times\mathbb{S}^{N-1}}\left\vert \mathbf{V}\left(
t\sigma\right)  \right\vert ^{2}\mathrm{dtd\sigma}+\beta^{2}\int
_{\mathbb{R}\times\mathbb{S}^{N-1}}e^{\left(  2a-2b+2\right)  t}\left\vert
\mathbf{V}\left(  t\sigma\right)  \right\vert ^{2}\mathrm{dtd\sigma}\\
&  +\alpha\int_{\mathbb{R}\times\mathbb{S}^{N-1}}\partial_{t}\left\vert
\mathbf{V}\left(  t\sigma\right)  \right\vert ^{2}\mathrm{dtd\sigma}+\beta
\int_{\mathbb{R}\times\mathbb{S}^{N-1}}e^{\left(  a-b+1\right)  t}\partial
_{t}\left\vert \mathbf{V}\left(  t\sigma\right)  \right\vert ^{2}%
\mathrm{dtd\sigma}\\
&  +2\alpha\beta\int_{\mathbb{R}\times\mathbb{S}^{N-1}}e^{\left(
a-b+1\right)  t}\left\vert \mathbf{V}\left(  t\sigma\right)  \right\vert
^{2}\mathrm{dtd\sigma}.
\end{align*}
Now, since $\mathbf{U}\in X_{a,b}\left(  \mathbb{R}^{N}\right)  $, we have
\[
\lim_{r\rightarrow0,\infty}r^{-1+\frac{N}{2}-a}\mathbf{U}\left(
r\sigma\right)  =\lim_{r\rightarrow0,\infty}r^{-b+\frac{N}{2}}\mathbf{U}%
\left(  r\sigma\right)  =0.
\]
Therefore
\[
\lim_{t\rightarrow-\infty,\infty}e^{\left(  a-b+1\right)  t}\mathbf{V}\left(
t\sigma\right)  =\lim_{t\rightarrow-\infty,\infty}\mathbf{V}\left(
t\sigma\right)  =0.
\]
Therefore, by integrations by parts, we get
\begin{align*}
&  \int_{\mathbb{R}\times\mathbb{S}^{N-1}}\left\vert \partial_{t}%
\mathbf{V}\left(  t\sigma\right)  +\alpha\mathbf{V}\left(  t\sigma\right)
+\beta e^{\left(  a-b+1\right)  t}\mathbf{V}\left(  t\sigma\right)
\right\vert ^{2}\mathrm{dtd\sigma}\\
&  =\int_{\mathbb{R}\times\mathbb{S}^{N-1}}\left\vert \partial_{t}%
\mathbf{V}\left(  t\sigma\right)  \right\vert ^{2}\mathrm{dtd\sigma}%
+\alpha^{2}\int_{\mathbb{R}\times\mathbb{S}^{N-1}}\left\vert \mathbf{V}\left(
t\sigma\right)  \right\vert ^{2}\mathrm{dtd\sigma}+\beta^{2}\int
_{\mathbb{R}\times\mathbb{S}^{N-1}}e^{\left(  2a-2b+2\right)  t}\left\vert
\mathbf{V}\left(  t\sigma\right)  \right\vert ^{2}\mathrm{dtd\sigma}\\
&  +\beta\left(  2\alpha-\left(  a-b+1\right)  \right)  \int_{\mathbb{R}%
\times\mathbb{S}^{N-1}}e^{\left(  a-b+1\right)  t}\left\vert \mathbf{V}\left(
t\sigma\right)  \right\vert ^{2}\mathrm{dtd\sigma}.
\end{align*}
and
\begin{align*}
&  \left[  \int_{\mathbb{R}\times\mathbb{S}^{N-1}}\left\vert \partial
_{t}\mathbf{V}\left(  t\sigma\right)  \right\vert ^{2}\mathrm{dtd\sigma
}+\alpha^{2}\int_{\mathbb{R}\times\mathbb{S}^{N-1}}\left\vert \mathbf{V}%
\left(  t\sigma\right)  \right\vert ^{2}\mathrm{dtd\sigma}\right]  \left[
\int_{\mathbb{R}\times\mathbb{S}^{N-1}}e^{\left(  2a-2b+2\right)  t}\left\vert
\mathbf{V}\left(  t\sigma\right)  \right\vert ^{2}\mathrm{dtd\sigma}\right] \\
&  \geq\left(  \alpha-\frac{a-b+1}{2}\right)  ^{2}\left(  \int_{\mathbb{R}%
\times\mathbb{S}^{N-1}}e^{\left(  a-b+1\right)  t}\left\vert \mathbf{V}\left(
t\sigma\right)  \right\vert ^{2}\mathrm{dtd\sigma}\right)  ^{2}.
\end{align*}
Now, we choose $\alpha=\sqrt{\left(  1-\frac{N}{2}+a\right)  ^{2}+N-1}$ and
get%
\begin{align*}
&  \left[  \int_{\mathbb{R}\times\mathbb{S}^{N-1}}\left\vert \partial
_{t}\mathbf{V}\left(  t\sigma\right)  \right\vert ^{2}\mathrm{dtd\sigma
}+\alpha^{2}\int_{\mathbb{R}\times\mathbb{S}^{N-1}}\left\vert \mathbf{V}%
\left(  t\sigma\right)  \right\vert ^{2}\mathrm{dtd\sigma}\right] \\
&  \times\left[  \int_{\mathbb{R}\times\mathbb{S}^{N-1}}e^{\left(
2a-2b+2\right)  t}\left\vert \mathbf{V}\left(  t\sigma\right)  \right\vert
^{2}\mathrm{dtd\sigma}\right] \\
&  \geq\left(  \sqrt{\left(  1-\frac{N}{2}+a\right)  ^{2}+N-1}-\frac{a-b+1}%
{2}\right)  ^{2}\left[  \int_{\mathbb{R}\times\mathbb{S}^{N-1}}e^{\left(
a-b+1\right)  t}\left\vert \mathbf{V}\left(  t\sigma\right)  \right\vert
^{2}\mathrm{dtd\sigma}\right]  ^{2}\text{.}%
\end{align*}
The equality happens when $\partial_{t}\mathbf{V}\left(  t\sigma\right)
+\alpha\mathbf{V}\left(  t\sigma\right)  +\beta e^{\left(  a-b+1\right)
t}\mathbf{V}\left(  t\sigma\right)  =0$ with
\[
\alpha=\sqrt{\left(  1-\frac{N}{2}+a\right)  ^{2}+N-1}%
\]
and
\[
\beta=\left(  -\sqrt{\left(  1-\frac{N}{2}+a\right)  ^{2}+N-1}+\frac{a-b+1}%
{2}\right)  \frac{\int_{\mathbb{R}\times\mathbb{S}^{N-1}}e^{\left(
a-b+1\right)  t}\left\vert \mathbf{V}\left(  t\sigma\right)  \right\vert
^{2}\mathrm{dtd\sigma}}{\int_{\mathbb{R}\times\mathbb{S}^{N-1}}e^{\left(
2a-2b+2\right)  t}\left\vert \mathbf{V}\left(  t\sigma\right)  \right\vert
^{2}\mathrm{dtd\sigma}}<0.
\]
That is
\[
\mathbf{V}\left(  t\sigma\right)  =e^{-\alpha t-\frac{\beta}{\left(
a-b+1\right)  }e^{\left(  a-b+1\right)  t}}\sigma
\]
and $\mathbf{V}\left(  r\sigma\right)  =r^{-\alpha}e^{-\frac{\beta}{\left(
a-b+1\right)  }r^{\left(  a-b+1\right)  }}\sigma$. Therefore $\mathbf{U}%
\left(  r\sigma\right)  =r^{1-\frac{N}{2}+a-\alpha}e^{-\frac{\beta}{\left(
a-b+1\right)  }r^{\left(  a-b+1\right)  }}\sigma.$ That is $\mathbf{U}\left(
x\right)  =\left\vert x\right\vert ^{-\frac{N}{2}+a-\alpha}e^{-\frac{\beta
}{\left(  a-b+1\right)  }\left\vert x\right\vert ^{\left(  a-b+1\right)  }}x$.
Note that $\mathbf{U}$ is curl-free since $\frac{\partial U_{k}}{dx_{j}}%
=\frac{\partial U_{j}}{dx_{k}}$. Also, it is easy to check that $\mathbf{U}\in
X_{a,b}\left(  \mathbb{R}^{N}\right)  $. Indeed, since $a-b+1<0$, we can see
easily that $\lim_{r\rightarrow0}r^{k}\mathbf{U}\left(  r\sigma\right)  =0$
for all $k$. Now,
\begin{align*}
\lim_{r\rightarrow\infty}r^{-1+\frac{N}{2}-a}\mathbf{U}\left(  r\sigma\right)
&  =\lim_{r\rightarrow\infty}r^{-1+\frac{N}{2}-a}r^{1-\frac{N}{2}+a-\alpha
}e^{-\frac{\beta}{\left(  a-b+1\right)  }r^{\left(  a-b+1\right)  }}\sigma\\
&  =\lim_{r\rightarrow\infty}r^{-\sqrt{\left(  1-\frac{N}{2}+a\right)
^{2}+N-1}}=0
\end{align*}
and%
\begin{align*}
\lim_{r\rightarrow\infty}r^{-b+\frac{N}{2}}\mathbf{U}\left(  r\sigma\right)
&  =\lim_{r\rightarrow\infty}r^{-b+\frac{N}{2}}r^{1-\frac{N}{2}+a-\alpha
}e^{-\frac{\beta}{\left(  a-b+1\right)  }r^{\left(  a-b+1\right)  }}\sigma\\
&  =\lim_{r\rightarrow\infty}r^{a-b+1}r^{-\sqrt{\left(  1-\frac{N}%
{2}+a\right)  ^{2}+N-1}}=0.
\end{align*}

\end{proof}

\section{Weighted second order Heisenberg Uncertainty Principle-Proof of
Theorem \ref{T2}}

\begin{proof}
[Proof of Theorem \ref{T2}]We have for $u\in Y_{a}(\mathbb{R}^{N})$ and any
$s,t\in%
\mathbb{R}
$ that
\begin{align*}
&  \int_{\mathbb{R}^{N}}\left\vert \frac{\Delta u}{\left\vert x\right\vert
^{a}}+t\left\vert x\right\vert ^{a+1}\frac{x}{\left\vert x\right\vert }%
\cdot\nabla u+s\left\vert x\right\vert ^{a}u\right\vert ^{2}\mathrm{dx}\\
&  =\int_{\mathbb{R}^{N}}\frac{\left\vert \Delta u\right\vert ^{2}}{\left\vert
x\right\vert ^{2a}}\mathrm{dx}+t^{2}\int_{\mathbb{R}^{N}}\left\vert
x\right\vert ^{2a+2}\left\vert \frac{x}{\left\vert x\right\vert }\cdot\nabla
u\right\vert ^{2}\mathrm{dx}+s^{2}\int_{\mathbb{R}^{N}}\left\vert x\right\vert
^{2a}\left\vert u\right\vert ^{2}\mathrm{dx}\\
&  +2t\int_{\mathbb{R}^{N}}\Delta u\left(  x\cdot\nabla u\right)
\mathrm{dx}+2s\int_{\mathbb{R}^{N}}u\Delta u\mathrm{dx}+2ts\int_{\mathbb{R}%
^{N}}\left\vert x\right\vert ^{2a}u\left(  x\cdot\nabla u\right)
\mathrm{dx}\text{.}%
\end{align*}
Since $u\in Y_{a}(\mathbb{R}^{N})$, we have by integration by parts that
\[
\int_{\mathbb{R}^{N}}\Delta u\left(  x\cdot\nabla u\right)  \mathrm{dx}%
=\frac{N-2}{2}\int_{\mathbb{R}^{N}}\left\vert \nabla u\right\vert
^{2}\mathrm{dx}\text{,}%
\]%
\[
\int_{\mathbb{R}^{N}}u\Delta u\mathrm{dx}=-\int_{\mathbb{R}^{N}}\left\vert
\nabla u\right\vert ^{2}\mathrm{dx}%
\]
and
\begin{align*}
\int_{\mathbb{R}^{N}}\left\vert x\right\vert ^{2a}u\left(  x\cdot\nabla
u\right)  \mathrm{dx} &  =%
{\displaystyle\sum\limits_{j=1}^{N}}
\int_{\mathbb{R}^{N}}\left\vert x\right\vert ^{2a}ux_{j}\partial
_{j}u\mathrm{dx}\\
&  =-%
{\displaystyle\sum\limits_{j=1}^{N}}
\int_{\mathbb{R}^{N}}u\partial_{j}\left(  \left\vert x\right\vert ^{2a}%
ux_{j}\right)  \mathrm{dx}\\
&  =-%
{\displaystyle\sum\limits_{j=1}^{N}}
\left(  \int_{\mathbb{R}^{N}}\left\vert x\right\vert ^{2a}ux_{j}\partial
_{j}u\mathrm{dx}+\int_{\mathbb{R}^{N}}\left\vert x\right\vert ^{2a}\left\vert
u\right\vert ^{2}\mathrm{dx}+\int_{\mathbb{R}^{N}}\left\vert u\right\vert
^{2}2a\left\vert x\right\vert ^{2a-2}x_{j}^{2}\right)  \\
&  =-\int_{\mathbb{R}^{N}}\left\vert x\right\vert ^{2a}u\left(  x\cdot\nabla
u\right)  \mathrm{dx}-N\int_{\mathbb{R}^{N}}\left\vert x\right\vert
^{2a}\left\vert u\right\vert ^{2}\mathrm{dx}-2a\int_{\mathbb{R}^{N}}\left\vert
x\right\vert ^{2a}\left\vert u\right\vert ^{2}\mathrm{dx}\\
&  =-\left(  \frac{N}{2}+a\right)  \int_{\mathbb{R}^{N}}\left\vert
x\right\vert ^{2a}\left\vert u\right\vert ^{2}\mathrm{dx}\text{.}%
\end{align*}
Therefore%
\begin{align*}
&  \int_{\mathbb{R}^{N}}\left\vert \frac{\Delta u}{\left\vert x\right\vert
^{a}}+t\left\vert x\right\vert ^{a+1}\frac{x}{\left\vert x\right\vert }%
\cdot\nabla u+s\left\vert x\right\vert ^{a}u\right\vert ^{2}\\
&  =\int_{\mathbb{R}^{N}}\frac{\left\vert \Delta u\right\vert ^{2}}{\left\vert
x\right\vert ^{2a}}\mathrm{dx}+t^{2}\int_{\mathbb{R}^{N}}\left\vert
x\right\vert ^{2a+2}\left\vert \frac{x}{\left\vert x\right\vert }\cdot\nabla
u\right\vert ^{2}\mathrm{dx}+s^{2}\int_{\mathbb{R}^{N}}\left\vert x\right\vert
^{2a}\left\vert u\right\vert ^{2}\mathrm{dx}\\
&  +t\left(  N-2\right)  \int_{\mathbb{R}^{N}}\left\vert \nabla u\right\vert
^{2}\mathrm{dx}-2s\int_{\mathbb{R}^{N}}\left\vert \nabla u\right\vert
^{2}\mathrm{dx}-2ts\left(  \frac{N}{2}+a\right)  \int_{\mathbb{R}^{N}%
}\left\vert x\right\vert ^{2a}\left\vert u\right\vert ^{2}\mathrm{dx}.%
\end{align*}
By choosing $s=2t\left(  \frac{N}{2}+a\right)  $, we get that for all $t\in%
\mathbb{R}
:$%
\begin{equation}
\begin{aligned} & \int_{\mathbb{R}^{N}}\left\vert \Delta u\frac{x}{\left\vert x\right\vert ^{a+1}}+t\left\vert x\right\vert ^{a+1}\nabla u+2t\left( \frac{N}{2}+a\right) u\frac{x}{\left\vert x\right\vert ^{1-a}}\right\vert ^{2}\\ & =t^{2}\int_{\mathbb{R}^{N}}\left\vert x\right\vert ^{2a+2}\left\vert \frac{x}{\left\vert x\right\vert }\cdot\nabla u\right\vert ^{2}\mathrm{dx}-\left( N+2+4a\right) t\int_{\mathbb{R}^{N}}\left\vert \nabla u\right\vert ^{2}\mathrm{dx}+\int_{\mathbb{R}^{N}}\frac{\left\vert \Delta u\right\vert ^{2}}{\left\vert x\right\vert ^{2a}}\mathrm{dx}\text{.} \end{aligned}\label{eq:quadratic-inequality}%
\end{equation}
From here, we deduce that%
\[
\left(  \int_{\mathbb{R}^{N}}\frac{\left\vert \Delta u\right\vert ^{2}%
}{\left\vert x\right\vert ^{2a}}\mathrm{dx}\right)  \left(  \int
_{\mathbb{R}^{N}}\left\vert x\right\vert ^{2a+2}\left\vert \frac{x}{\left\vert
x\right\vert }\cdot\nabla u\right\vert ^{2}\mathrm{dx}\right)  \geq\left(
\frac{N+2+4a}{2}\right)  ^{2}\left(  \int_{\mathbb{R}^{N}}\left\vert \nabla
u\right\vert ^{2}\mathrm{dx}\right)  ^{2}.
\]
Moreover, if the quadratic \eqref{eq:quadratic-inequality} has a real root $t$
for a nontrivial $u$, then
\begin{equation}
t=\frac{N+2+4a}{2}\frac{\int_{\mathbb{R}^{N}}|\nabla u|^{2}dx}{\int
_{\mathbb{R}^{N}}|x|^{2a+2}\left\vert \frac{x}{|x|}\cdot\nabla u\right\vert
^{2}dx},\label{eq:t-is-root}%
\end{equation}
from which we have that $\operatorname{sgn}t=\operatorname{sgn}(N+2+4a)$. 
In particular, such a $u$ must satisfy
\begin{equation*}
\frac{\Delta u}{\left\vert x\right\vert ^{a}}+t\left\vert x\right\vert
^{a+1}\frac{x}{\left\vert x\right\vert }\cdot\nabla u+s\left\vert x\right\vert=0
\end{equation*}
for this given $t$.
We
note that when $N\geq2$, $a+1>0$ implies $N+2+4a>0$ (and hence $N+2+4a<0$
implies $a+1<0$).

We show that $u(x)=\alpha e^{-\frac{\beta}{2(a+1)}|x|^{2(a+1)}}$ is a radial
extremizer. We begin by showing that $u$ belongs to $Y_{a}(\mathbb{R}^{N})$.
First note that, if $\frac{\beta}{a+1}<0$, then $u$ behaves exponentially at
$0$ (when $a+1<0$) and at $\infty$ (when $a+1>0$), and so it must hold that
$\frac{\beta}{a+1}>0$. In case $\frac{\beta}{a+1}>0$, $u$ satisfies the
following limits:
\[
\text{if }a+1>0,\text{ then }u(x)\rightarrow%
\begin{cases}
0 & \text{as }|x|\rightarrow\infty\\
\alpha & \text{as }|x|\rightarrow0
\end{cases}
,
\]%
\[
\text{if }a+1<0,\text{ then }u(x)\rightarrow%
\begin{cases}
0 & \text{as }|x|\rightarrow0\\
\alpha & \text{as }|x|\rightarrow\infty
\end{cases}
,
\]
and when $u$ tends to $0$, it does so exponentially. It follows that
integrability is dictated by the parameter $a$. Supposing $a+1>0$ (and hence
$\beta>0$), it is not hard to see that the integration by parts performed
above for $u(x)=\alpha e^{-\frac{\beta}{2(a+1)}|x|^{2(a+1)}}$ are valid only
when $N+2+4a>0$. Similarly, if $a+1<0$, then there needs to hold $N+2+4a<0$.
(This can be conclude from the computations below.) All that is left to check
is that equality in \eqref{wHUP_curl_scalar} holds for this particular $u$.

To this end, we compute
\begin{align*}
\nabla u  &  =-\alpha\beta e^{-\frac{\beta}{2\left(  1+a\right)  }\left\vert
x\right\vert ^{2\left(  1+a\right)  }}\left\vert x\right\vert ^{1+2a}\frac
{x}{\left\vert x\right\vert },\\
\frac{x}{\left\vert x\right\vert }\cdot\nabla u  &  =-\alpha\beta
e^{-\frac{\beta}{2\left(  1+a\right)  }\left\vert x\right\vert ^{2\left(
1+a\right)  }}\left\vert x\right\vert ^{1+2a},
\end{align*}
and
\begin{align*}
\Delta u  &  =\alpha\beta\left\vert x\right\vert ^{2a}e^{-\frac{\beta
}{2\left(  1+a\right)  }\left\vert x\right\vert ^{2\left(  1+a\right)  }%
}\left(  \beta\left\vert x\right\vert ^{2\left(  1+a\right)  }-2a-1\right) \\
&  -\alpha\beta\left(  N-1\right)  e^{-\frac{\beta}{2\left(  1+a\right)
}\left\vert x\right\vert ^{2\left(  1+a\right)  }}\left\vert x\right\vert
^{2a}.
\end{align*}
Therefore for any $t\in%
\mathbb{R}
:$%
\begin{align*}
&  \frac{\Delta u}{\left\vert x\right\vert ^{a}}+t\left\vert x\right\vert
^{a+1}\frac{x}{\left\vert x\right\vert }\cdot\nabla u+2t\left(  \frac{N}%
{2}+a\right)  \left\vert x\right\vert ^{a}u\\
&  =\left[  \beta\left\vert x\right\vert ^{a}\left(  \beta\left\vert
x\right\vert ^{2\left(  1+a\right)  }-2a-1\right)  -\beta\left(  N-1\right)
\left\vert x\right\vert ^{a}-\beta t\left\vert x\right\vert ^{2+3a}+t\left(
N+2a\right)  \left\vert x\right\vert ^{a}\right]  u\\
&  =\left(  \beta-t\right)  \left[  \beta\left\vert x\right\vert
^{2+3a}-\left(  N+2a\right)  \left\vert x\right\vert ^{a}\right]  u\text{.}%
\end{align*}
Therefore, by \eqref{eq:quadratic-inequality}, the quadratic equation
\[
t^{2}\int_{\mathbb{R}^{N}}\left\vert x\right\vert ^{2a+2}\left\vert \nabla
u\right\vert ^{2}\mathrm{dx}-\left(  N+2+4a\right)  t\int_{\mathbb{R}^{N}%
}\left\vert \frac{x}{\left\vert x\right\vert }\cdot\nabla u\right\vert
^{2}\mathrm{dx}+\int_{\mathbb{R}^{N}}\frac{\left\vert \Delta u\right\vert
^{2}}{\left\vert x\right\vert ^{2a}}\mathrm{dx}=0
\]
has exactly one real root $t=\beta$. Note that, by \eqref{eq:t-is-root}, if
$a+1>0$ or if $N+2+4a<0$, then $\frac{\beta}{a+1}>0$. We can now conclude
that
\[
\left(  \int_{\mathbb{R}^{N}}\frac{\left\vert \Delta u\right\vert ^{2}%
}{\left\vert x\right\vert ^{2a}}\mathrm{dx}\right)  \left(  \int
_{\mathbb{R}^{N}}\left\vert x\right\vert ^{2a+2}\left\vert \frac{x}{\left\vert
x\right\vert }\cdot\nabla u\right\vert ^{2}\mathrm{dx}\right)  =\left(
\frac{N+2+4a}{2}\right)  ^{2}\left(  \int_{\mathbb{R}^{N}}\left\vert \nabla
u\right\vert ^{2}\mathrm{dx}\right)  ^{2},
\]
as desired, thereby showing that $u(x)=\alpha e^{-\frac{\beta}{2(a+1)}%
|x|^{2(a+1)}}$ is a radial extremizer.

For $N\geq2$, we will show that there exist infinitely many nonradial
optimizers of (\ref{wHUP_curl_scalar}). Indeed, as observed above, equality happens in (\ref{wHUP_curl_scalar}) if and only if
\begin{equation}
\frac{\Delta u}{\left\vert x\right\vert ^{a}}+t\left\vert x\right\vert
^{a+1}\frac{x}{\left\vert x\right\vert }\cdot\nabla u+s\left\vert x\right\vert
^{a}u=0\label{eq:extremizer-pde}%
\end{equation}
for some $t$ and $s=2t\left(  \frac{N}{2}+a\right)  $ such that
$\operatorname{sgn}t=\operatorname{sgn}(N+2+4a)$.
We will show that \eqref{eq:extremizer-pde} has nonradial solutions.

To begin, we recall Kummer's confluent hypergeometric functions ${}_{1}%
F_{1}(A;B;z)$ which are solutions to Kummer's equation
\begin{equation}
z\frac{d^{2}w}{dz^{2}}+(B-z)\frac{dw}{dz}-Aw=0.\label{eq:kummers-equation}%
\end{equation}
We will show that solutions of \eqref{eq:extremizer-pde} may be expressed in
terms of Kelvin-like transforms of Kummer's confluent hypergeometric
functions, i.e., written in terms of the functions of the form $z^{\alpha}%
{}_{1}F_{1}\left(  A;B;cr^{\beta}\right)  $ for appropriately chosen
$A,B,\alpha,\beta,c\in\mathbb{R}$. To be precise, we show
\begin{align*}
u(x) &  =|x|^{\alpha}{}_{1}F_{1}\left(  \frac{\alpha+N+2a}{2a+2};\frac
{2\alpha+2a+N}{2a+2};-\frac{t}{2a+2}|x|^{2a+2}\right)  ,\\
\alpha &  =\frac{2-N\pm\sqrt{(N-2)^{2}-4\lambda}}{2},
\end{align*}
where $\lambda$ is an eigenvalue of the spherical Laplacian $\Delta_{\sigma}$.
We will suppose solutions of
\eqref{eq:extremizer-pde} take the form $u(x)=f(r)g(\sigma)$ with
$\Delta_{\sigma}g=\lambda g$ for some $\lambda\in\mathbb{R}$ (this is
justified by writing $u$ in terms of spherical harmonics and therefore
$\lambda=-c_{k}=-k\left(  N+k-2\right)  $, $k=0,1,...$). Then, by using \eqref{eq:spherical-decomp-laplacian},
\eqref{eq:extremizer-pde} may be reformulated as
\[
r^{-a}\left[  f^{\prime\prime}\left(  r\right)  g(\sigma)+\left(  N-1\right)
r^{-1}f^{\prime}\left(  r\right)  g(\sigma)+\lambda r^{-2}f(r)g(\sigma
)\right]  +tr^{a+1}f^{\prime}\left(  r\right)  g(\sigma)+sr^{a}f(r)g(\sigma
)=0.
\]
We thus need to find solutions to the ODE
\begin{equation}
y^{\prime\prime}+\left[  (N-1)r^{-1}+tr^{2a+1}\right]  y^{\prime}+\left[
\lambda r^{-2}+sr^{2a}\right]  y=0.\label{eq:extremizer-ode}%
\end{equation}
Comparing \eqref{eq:extremizer-ode} to \eqref{eq:kummers-equation}, we are led
to take $y=r^{\alpha}v(cr^{\beta})$, where $v(z)={}_{1}F_{1}(A;B;z)$ for some
to be determined $A$ and $B$. We compute%
\begin{align*}
y  & =r^{\alpha}v(cr^{\beta})\\
y^{\prime}  & =\alpha r^{\alpha-1}v(cr^{\beta})+c\beta r^{\alpha+\beta
-1}v^{\prime}\left(  cr^{\beta}\right)  \\
y^{\prime\prime}  & =\alpha\left(  \alpha-1\right)  r^{\alpha-2}v(cr^{\beta
})+(2\alpha+\beta-1)\beta cr^{\alpha+\beta-2}v^{\prime}\left(  cr^{\beta
}\right)  +c^{2}\beta^{2}r^{\alpha+2\beta-2}v^{\prime\prime}\left(  cr^{\beta
}\right)  .
\end{align*}
Therefore, \eqref{eq:extremizer-ode} becomes
\begin{align*}
&  \alpha(\alpha-1)r^{\alpha-2}v(cr^{\beta})+(2\alpha+\beta-1)\beta
cr^{\alpha+\beta-2}v^{\prime}\left(  cr^{\beta}\right)  +c^{2}\beta
^{2}r^{\alpha+2\beta-2}v^{\prime\prime}\left(  cr^{\beta}\right)  \\
&  +\left[  (N-1)r^{-1}+tr^{2a+1}\right]  \left(  \alpha r^{\alpha
-1}v(cr^{\beta})+c\beta r^{\alpha+\beta-1}v^{\prime}\left(  cr^{\beta}\right)
\right)  \\
&  +\left[  \lambda r^{-2}+sr^{2a}\right]  r^{\alpha}v(cr^{\beta})\\
&  =0.
\end{align*}
Simplifying and collecting terms, we obtain
\begin{align*}
&  c^{2}\beta^{2}r^{2\beta-2}v^{\prime\prime}\left(  cr^{\beta}\right)  \\
&  +\left[  2\alpha+\beta+N-2+tr^{2a+2}\right]  \beta cr^{\beta-2}v^{\prime
}\left(  cr^{\beta}\right)  \\
&  +\left[  \left(  \alpha(\alpha+N-2)+\lambda\right)  r^{-2}+(\alpha
t+s)r^{2a}\right]  v(cr^{\beta})=0.
\end{align*}
Letting $\alpha$ solve $\alpha(\alpha+N-2)+\lambda=0$, i.e.,
\[
\alpha=\frac{2-N\pm\sqrt{(N-2)^{2}-4\lambda}}{2},
\]
and further simplifying (recall $s=2t\left(  \frac{N}{2}+a\right)$), we obtain
\begin{equation}
\begin{aligned} &c^{2} \beta^{2} r^{2\beta-2a-2} v''(c r^{\beta})\\ &+\left[ 2\alpha+\beta+N-2 + tr^{2a+2} \right]\beta c r^{\beta-2a-2}v'(c r^{\beta})\\ &+t(\alpha+N+2a)v(c r^{\beta})=0. \end{aligned}\label{eq:penultimate}%
\end{equation}
By choosing
\begin{align*}
\beta &  =2a+2\\
A &  =\frac{\alpha+N+2a}{2a+2}\\
B &  =\frac{2\alpha+2a+N}{2a+2}\\
z &  =-\frac{t}{2a+2}r^{2a+2}\\
c & = -\frac{t}{2a+2},
\end{align*}
we conclude that \eqref{eq:penultimate} is equivalent to
\[
zv^{\prime\prime}(z)+(B-z)v^{\prime}(z)-Av(z)=0,
\]
and so
\[
v(r)={}_{1}F_{1}(A;B;z)={}_{1}F_{1}\left(  \frac{\alpha+N+2a}{2a+2}%
;\frac{2\alpha+2a+N}{2a+2};-\frac{t}{2a+2}r^{2a+2}\right)  ,
\]
with $\alpha$ as above. In conclusion, we obtain as solutions for
\eqref{eq:extremizer-pde} functions of the form
\[
u(x)=|x|^{\alpha}{}_{1}F_{1}\left(  \frac{\alpha+N+2a}{2a+2};\frac
{2\alpha+2a+N}{2a+2};-\frac{t}{2a+2}|x|^{2a+2}\right)  g\left(  \frac
{x}{\left\vert x\right\vert }\right)
\]
where $\Delta_{\sigma}g=\lambda g$ and
\[
\alpha=\frac{2-N\pm\sqrt{(N-2)^{2}-4\lambda}}{2}.
\]
Using superposition, we may obtain more solutions. The radial solutions are
obtained by taking $\lambda=0$, which implies $\alpha=0$ or $2-N$. Indeed, choosing
$\alpha=0$, we conclude
\[
u(x)={}_{1}F_{1}\left(  \frac{N+2a}{2a+2};\frac{N+2a}{2a+2};-\frac{t}%
{2a+2}|x|^{2a+2}\right)  =e^{-\frac{t}{2a+2}|x|^{2a+2}},
\]
which recovers the radial extremizers given above.

If $a+1>0$, then $N+2+4a>N+2a>0$. Therefore, $\frac{t}{2a+2}>0$. We will now
show that $U\in Y_{a}\left(  \mathbb{R}^{N}\right)  $ where
\[
U(x)=|x|^{\alpha}{}_{1}F_{1}\left(  \frac{\alpha+N+2a}{2a+2};\frac
{2\alpha+2a+N}{2a+2};-\frac{t}{2a+2}|x|^{2a+2}\right)  g\left(  \frac
{x}{\left\vert x\right\vert }\right)
\]
and
\[
\alpha=\frac{2-N+\sqrt{(N-2)^{2}-4\lambda}}{2}.
\]
Indeed, using the following asymptotic behavior of Kummer's function of the
first kind (see, for instance, \cite{Buc13}): for $r\rightarrow-\infty$
\[
_{1}F_{1}\left(  a,b,r\right)  \sim\Gamma\left(  b\right)  \frac{\left(
-r\right)  ^{-a}}{\Gamma\left(  b-a\right)  },
\]
we have that as $|x|\rightarrow\infty:$%
\[
_{1}F_{1}\left(  \frac{\alpha+N+2a}{2a+2};\frac{2\alpha+2a+N}{2a+2};-\frac
{t}{2a+2}|x|^{2a+2}\right)  =O\left(  |x|^{-\left(  \alpha+N+2a\right)
}\right)  .
\]
Therefore, as $|x|\rightarrow\infty:$%
\[
U(x)=O\left(  |x|^{-N-2a}\right)  .
\]
Also, using the formula $\frac{d}{dr}\left(  _{1}F_{1}\left(  a,b,r\right)
\right)  =\frac{a}{b}\left(  _{1}F_{1}\left(  a+1,b+1,r\right)  \right)  $
(see \cite{Buc13}), we get
\begin{align*}
\left\vert \frac{x}{|x|}\cdot\nabla U\left(  x\right)  \right\vert  &
=O\left(  |x|^{\alpha-1-\left(  \alpha+N+2a\right)  }\right)  +O\left(
|x|^{\alpha+2a+1-\left(  \alpha+N+2a\right)  -(2a+2)}\right)  \\
&  =O\left(  |x|^{-N-2a-1}\right).
\end{align*}
Hence
\[
\lim_{\left\vert x\right\vert \rightarrow\infty}\left\vert x\right\vert
^{N-1}\left\vert U\left(  x\right)  \right\vert ^{2}=\lim_{\left\vert
x\right\vert \rightarrow\infty}|x|^{-N-4a-1}=0\text{ since }N+4a+1>0\text{,}%
\]%
\[
\lim_{\left\vert x\right\vert \rightarrow\infty}\left\vert x\right\vert
^{N}\left\vert \frac{x}{|x|}\cdot\nabla u\left(  x\right)  \right\vert
^{2}=\lim_{\left\vert x\right\vert \rightarrow\infty}|x|^{-N-4a-2}=0\text{
since }N+4a+2>0,
\]
and
\[
\lim_{\left\vert x\right\vert \rightarrow\infty}\left\vert x\right\vert
^{2a+N}\left\vert U\left(  x\right)  \right\vert ^{2}=\lim_{\left\vert
x\right\vert \rightarrow\infty}|x|^{-N-2a}=0\text{ since }N+2a>0\text{.}%
\]

As $r\rightarrow0$, then
\[
_{1}F_{1}\left(  a,b,r\right)  =1+O\left(  r\right)  .
\]
Therefore, it is easy to see that
\[
\lim_{\left\vert x\right\vert \rightarrow0}\left\vert x\right\vert
^{N-1}\left\vert U\left(  x\right)  \right\vert ^{2}=0
\]%
\[
\lim_{\left\vert x\right\vert \rightarrow0,\infty}\left\vert x\right\vert
^{N}\left\vert \frac{x}{|x|}\cdot\nabla U\left(  x\right)  \right\vert ^{2}=0
\]
and
\[
\lim_{\left\vert x\right\vert \rightarrow0}\left\vert x\right\vert
^{2a+N}\left\vert U\left(  x\right)  \right\vert ^{2}=0.
\]

If $N+2a<0$, then $a+1<0$ and $N+4a+2<0$. Hence, $\frac{t}{2a+2}>0$. In this
case, we choose
\[
\alpha=\frac{2-N-\sqrt{(N-2)^{2}-4\lambda}}{2}.
\]
Now, when $|x|\rightarrow\infty$, then $z=-\frac{t}{2a+2}|x|^{2a+2}%
\rightarrow0$. Hence,
\[
_{1}F_{1}\left(  \frac{\alpha+N+2a}{2a+2};\frac{2\alpha+2a+N}{2a+2};-\frac
{t}{2a+2}|x|^{2a+2}\right)  =1+O\left(  |x|^{2a+2}\right)
\]
and $U(x)=O\left(  |x|^{\alpha}\right)  $. Therefore
\begin{align*}
\left\vert x\right\vert ^{N-1}\left\vert U\left(  x\right)  \right\vert ^{2}
&  =O\left(  |x|^{N-1+2\alpha}\right)  =o\left(  1\right)  \\
\left\vert x\right\vert ^{2a+N}\left\vert U\left(  x\right)  \right\vert ^{2}
&  =O\left(  |x|^{2a+N+2\alpha}\right)  =o\left(  1\right)  \\
\left\vert x\right\vert ^{N}\left\vert \frac{x}{|x|}\cdot\nabla U\left(
x\right)  \right\vert ^{2} &  =O\left(  |x|^{N+2\alpha-2}\right)  =o\left(
1\right)  .
\end{align*}

When $|x|\rightarrow0$, then $z=-\frac{t}{2a+2}|x|^{2a+2}\rightarrow-\infty$.
In this case, as above, we have
\[
\lim_{\left\vert x\right\vert \rightarrow0}\left\vert x\right\vert
^{N-1}\left\vert U\left(  x\right)  \right\vert ^{2}=\lim_{\left\vert
x\right\vert \rightarrow0}|x|^{-N-4a-1}=0\text{ since }N+4a+1<0\text{,}%
\]%
\[
\lim_{\left\vert x\right\vert \rightarrow0}\left\vert x\right\vert
^{N}\left\vert \frac{x}{|x|}\cdot\nabla u\left(  x\right)  \right\vert
^{2}=\lim_{\left\vert x\right\vert \rightarrow0}|x|^{-N-4a-2}=0\text{ since
}N+4a+2<0\text{,}%
\]
and
\[
\lim_{\left\vert x\right\vert \rightarrow0}\left\vert x\right\vert
^{2a+N}\left\vert U\left(  x\right)  \right\vert ^{2}=\lim_{\left\vert
x\right\vert \rightarrow0}|x|^{-N-2a}=0\text{ since }N+2a<0\text{.}%
\]

\end{proof}

\end{document}